\documentclass[12pt]{amsart}
\usepackage{amssymb,latexsym, array}
\usepackage{verbatim,euscript}
\usepackage{fullpage,color}
\usepackage{tikz}
\usetikzlibrary{arrows,shapes,trees}

\sloppy

\usepackage[colorlinks=true,linkcolor=blue,urlcolor=my_color,citecolor=magenta]{hyperref}

\definecolor{my_color}{rgb}{0,0.5,0.5}
\definecolor{MIXT}{rgb}{0.4,0.3,0.6}
\definecolor{mixt}{rgb}{0.5,0.3,0.2}
\definecolor{sin}{rgb}{0,0.5,0.5}
\definecolor{darkblue}{rgb}{0,0.1,0.8}
\definecolor{redi}{rgb}{0.5,0,0.4}

\input {cyracc.def}
\numberwithin{equation}{section}

\tolerance=4000

\font\tencyr=wncyr10 
\def\rus{\tencyr\cyracc}

\newtheorem{thm}{Theorem}[section]
\newtheorem{lm}[thm]{Lemma} 

\newtheorem{prop}[thm]{Proposition}

\theoremstyle{remark}

\newtheorem{rmk}[thm]{Remark}

\theoremstyle{definition}
\newtheorem{ex}[thm]{Example}
\newtheorem{df}{Definition}




\newcommand {\be}{{\mathfrak b}}

\newcommand {\g}{{\mathfrak g}}

\newcommand {\el}{{\mathfrak l}}

\newcommand {\p}{{\mathfrak p}}
\newcommand {\q}{{\mathfrak q}}
\newcommand {\rr}{{\mathfrak r}}
\newcommand {\es}{{\mathfrak s}}
\newcommand {\te}{{\mathfrak t}}

\newcommand {\gln}{{\mathfrak{gl}}_n}
\newcommand {\sln}{{\mathfrak{sl}}_n}

\newcommand {\gltn}{{\mathfrak{gl}}_{2n}}

\newcommand {\gltno}{{\mathfrak{gl}}_{2n+1}}

\newcommand {\spn}{{\mathfrak{sp}}_{2n}}
\newcommand {\son}{{\mathfrak{so}}_{n}}
\newcommand {\sono}{{\mathfrak{so}}_{2n+1}}
\newcommand {\sone}{{\mathfrak{so}}_{2n}}


\newcommand {\esi}{\varepsilon}
\newcommand {\ap}{\alpha}

\newcommand {\ads}{{\mathrm{ad^*}}}

\newcommand {\ind}{{\mathsf{ind\,}}}
\newcommand {\Lie}{{\mathrm{Lie\,}}}

\newcommand {\rk}{{\mathsf{rk\,}}}

\newcommand {\un}{\underline}
\newcommand {\tri}{\mathfrak{sl}_2}

\newcommand {\beq}{\begin{equation}}
\newcommand {\eeq}{\end{equation}}

\newcommand {\boa}{\un{a}}
\newcommand {\koa}{k(\un{a})}
\newcommand {\bob}{\un{b}}
\newcommand {\MRS}{{\sf MRS\ }}


\newcommand{\gt}{\mathfrak}

\newcommand{\GL}{{\rm GL}}



\newcommand {\cK}{{\mathcal K}}

\newcommand {\cT}{{\mathcal T}}

\newcommand {\BC}{{\mathbb C}}

\newcommand {\BR}{{\mathbb R}}

\renewcommand{\le}{\leqslant}
\renewcommand{\ge}{\geqslant}

\newfam\eusfam%
\font\euszw=eusm10 scaled 1200%
\font\eusac=eusm7 scaled 1200%
\font\eusacc=eusm7 scaled 1000%
\textfont\eusfam=\euszw\scriptfont\eusfam=\eusac%
\scriptscriptfont\eusfam=\eusacc%
%

\begin{document}
\hfill {\scriptsize February 24, 2017}
\vskip1ex

\title{On seaweed subalgebras and meander graphs in type {\sf D}}
\author[D.\,Panyushev]{Dmitri I.~Panyushev}
\address[D.P.]{Institute for Information Transmission Problems of the Russian Academy of Sciences, 
Bolshoi Karetnyi per. 19,  Moscow 127051, Russia}
\email{panyushev@iitp.ru}
\author[O.\,Yakimova]%
{Oksana S.~Yakimova}
\address[O.Y.]{Institut f\"ur Mathematik, Friedrich-Schiller-Universit\"at Jena,  D-07737 Jena, 
Deutschland}
\email{oksana.yakimova@uni-jena.de}
\thanks{The research of the first author was carried out at the IITP RAS at the expense of the Russian Foundation for Sciences (project {\rus N0} 14-50-00150). 
The second author is partially supported by the Graduiertenkolleg GRK 1523 
``Quanten- und Gravitationsfelder".}

\subjclass[2010]{17B08, 17B20}

\begin{abstract}
In 2000, Dergachev and Kirillov introduced subalgebras of "seaweed type" in $\gln$ and computed their 
index using certain graphs, which we call type-{\sf A} meander graphs. Then the subalgebras of seaweed 
type, or just  "seaweeds", have been defined by Panyushev (2001) for arbitrary reductive Lie algebras. 
Recently, a meander graph approach to computing the index in types {\sf B} and {\sf C} has been developed by the authors.
In this article, we consider the most difficult and interesting case of type ${\sf D}$. Some new phenomena occurring here are related to the fact that the Dynkin diagram has a branching node.
\end{abstract}
\maketitle

\section{Introduction}   
\label{sect:intro}

\noindent 
A general philosophy of Representation Theory proclaims that the coadjoint action of an algebraic group 
$Q$ encodes information on many other actions.   
An important numerical characterisation of the  coadjoint action is 
the index. 

The {\it index\/} of an algebraic Lie algebra $\q$, $\ind\q$, is the minimal dimension of the stabilisers for the 
coadjoint representation of $\q$.  If $\q$ is reductive, then $\ind\q=\rk\q$. Hence the index can be thought of 
as a generalisation of rank.  But for non-reductive Lie algebras, it is often hard to evaluate.  In this paper,
we elaborate on the meander graph approach to computing the index of the seaweed subalgebras in 
$\sone$. As similar method have previously been developed in types {\sf A,B}, and {\sf C}~\cite{dk00,SW-C}, our present results complete a meander graph approach to the index of seaweed 
subalgebras for the classical Lie algebras.

For $\gln$, the seaweed subalgebras (or just  {\it seaweeds}) have been introduced by Dergachev and 
Kirillov~\cite{dk00}.  These are subalgebras of specific matrix shape (see Figure~\ref{fig:type-A} below) that 
resembles seaweeds, hence the term.  A general definition suited for arbitrary reductive Lie algebras 
$\g$ appears in~\cite{Dima01}. Namely, if $\p_1,\p_2\subset \g$ are parabolic subalgebras such that 
$\p_1+\p_2=\g$, then $\q=\p_1\cap\p_2$ is called a seaweed in $\g$. (For this reason, some people began 
to use later the term "biparabolic subalgebra" for such $\q$.) 
The seaweed subalgebras form a wide class of Lie algebras which include all parabolics and their Levi 
subalgebras. 

Without loss of generality, one may assume that  $\p_1$ and $\p_2$ are ``adapted'' to a fixed 
triangular decomposition of $\g$, see Section~\ref{sect:generalities} for details. Then $\q$ is said to be 
{\it standard}. The standard seaweeds are in a one-to-one correspondence with the pairs of subsets of the set of simple roots of $\g$~\cite{Dima01}. 
An inductive procedure for computing the index of standard seaweeds in the classical Lie algebras is 
presented in~\cite{Dima01}. The procedure helps to answer several subtle 
questions on the coadjoint action~\cite{Dima03-b,MY12}. In that procedure, seaweeds naturally appear 
when one is trying to compute the index of a parabolic subalgebra in type {\sf A}.   
In the other classical types, a parabolic cannot be reduced any further and therefore the parabolic 
subalgebras have to be included into the induction base. In types {\sf B} and {\sf C}, 
any seaweed can be reduced to a parabolic. However, this is not always the case in type~{\sf D}, and this
phenomenon was overlooked in~\cite[Sect.\,5]{Dima01}. 
This is also one of the sources of many difficulties in developing 
the theory of meander graphs in type {\sf D}. 

In \cite{dk00}, the index of the seaweed subalgebras of $\gln$ has been computed using certain graphs,
which are said to be type-{\sf A} meander graphs. 
Recently, the authors introduced meander graphs in types {\sf C} and {\sf B},
and gave a formula for the index of seaweeds in terms of these graphs~\cite{SW-C}. 
In this paper, we introduce {\it type-{\sf D} meander graphs\/} and compute the index of the seaweeds 
in $\sone$ via these graphs, see Theorem~\ref{thm:main}. 
Unlike the other classical cases, the present situation is more involved, and the reason is that the Dynkin diagram of {\sf D}$_n$ has a branching node. Thanks to the presence of the branching node, we get two new phenomena. {\it First}, there is no natural bijection between the standard parabolics in $\sone$ 
and the compositions with sum at most $n$ (as it happens in $\sono$ and $\spn$). {\it Second}, there are 
certain seaweeds in $\sone$ that do not admit a matrix realisation of ``seaweed shape". The definition of a meander graph for them requires a trick, and their meander graphs acquire two arcs crossing each other.
This is a completely new phenomenon that does not occur in the other classical types. The corresponding subalgebras are said to be {\it seaweeds with crossing}. The seaweeds that cannot be reduced to a 
parabolic occur only among seaweeds with crossing.
 
A general algebraic formula for the index of the seaweeds has been proposed 
in~\cite[Conj.\,4.7]{ty-AIF} and then proved in~\cite[Section\,8]{jos}. An advantage of the meander graph 
approach is that it allows us to detect new interesting classes of Frobenius seaweeds. Recall that $\q$ is 
called {\it Frobenius\/} if $\ind\q=0$.
These are curious Lie algebras that gained popularity 
owing to their connection with the classical Yang-Baxter equation.  
For more on Frobenius Lie algebras and their r\^ole in Invariant Theory, see e.g.~\cite{Ooms}.  

Further properties of the coadjoint action have been studied for the seaweed algebras. For instance, articles~\cite{j2,MY12} show that there are many interesting phenomena arising here. 

The structure of the article is as follows.
In Section~\ref{sect:generalities}, we provide generalities on the arbitrary seaweeds and recall known 
results in types {\sf A,B,} and {\sf C}. Section~\ref{sect:D} is devoted to the detailed 
construction of meander graphs for the seaweeds in $\sone$. Our main result---a formula for $\ind\q$ in 
terms of the meander graph $\Gamma_n(\q)$---is stated and proved in Section~\ref{sect:main}. As 
in~\cite{SW-C}, our proof heavily relies on the inductive procedure of \cite{Dima01}.
In Section~\ref{sect:applic}, we gather some further results
concerning generic stabilisers, maximal reductive stabilisers, and  Frobenius cases for seaweeds in 
$\sone$.
\\ \indent
Throughout the paper, the  ground field is $\BC$. 

\section{Generalities on seaweed subalgebras and meander graphs}   
\label{sect:generalities}

\noindent 
We assume that a reductive algebraic Lie algebra $\g$ is equipped with a fixed triangular decomposition, 
so that there are two opposite 
Borel subalgebras $\be$ and $\be^-$, and a Cartan subalgebra $\te=\be\cap\be^-$. Let $\Delta^+$ be the 
set of roots of $(\be,\te)$ and $\Pi=\Pi_\g=\{\ap_1,\dots,\ap_n\}$ the set of simple roots in $\Delta^+$.
If $\gamma\in\Delta$, then  $\g_\gamma$ is the corresponding root space. 

Let $\p_1$ and $\p_2$ be two parabolic subalgebras of $\g$. If $\p_1 +\p_2=\g$, then $\p_1\cap\p_2$ is 
called a {\it seaweed subalgebra\/} or just {\it seaweed\/} in $\g$ (see \cite{Dima01}).
The set of seaweeds includes all parabolics (if $\p_2=\g$), all Levi subalgebras (if $\p_1$ and $\p_2$ are 
opposite), and many interesting non-reductive subalgebras. Without loss of generality,  we may also 
assume that  $\p_1\supset \be$ (i.e., $\p_1$ is {\it standard}) and $\p_2=\p_2^-\supset \be^-$ 
(i.e., $\p_2$ is {\it opposite-standard}). Then the seaweed $\q=\p_1\cap\p_2^-$ is said to be {\it standard}, 
too. Either of these parabolics is determined by a subset of $\Pi$. If $\p$ is standard, $\el\subset\p$ is a 
standard Levi subalgebra (i.e. $\te\subset\el$), and $S\subset\Pi$ is the set of simple roots of $\el$,
then we write $\el=\el(S)$ and $\p=\p(S)$; and likewise for $\p^-\supset\be^-$. 
In particular, $\p(\varnothing)=\be$, $\p^-(\varnothing)=\be^-$, and $\p(\Pi)=\p^-(\Pi)=\g$.
Then $\Pi\setminus S=\{\ap\in\Pi \mid \g_{-\ap}\not\subset \p(S)\}=\{\ap \mid \g_{\ap}\not\subset \p^-(S)\}$. 
Thus, a standard seaweed is determined by two arbitrary subsets  $S,T\subset \Pi$, and
we set  $\q(S,T)=\p(S)\cap \p^-(T)$, cf. also~\cite[Sect.\,2]{Dima01}. 
Clearly, $\q(S,T)$ is reductive if and only if $S=T$, $\q(S,T)$ is parabolic if and only if $S=\Pi$ or
$T=\Pi$, and $\q(S,T)\simeq \q(T,S)$.

\begin{rmk}   \label{rem:union=Pi}
If $\ap\in \Pi\setminus (S\cup T)$ , then $\q(S,T)$ is contained in the Levi subalgebra
$\el(\Pi\setminus\{\ap\})$.
Therefore, $\q(S,T)$ does not belong to a proper Levi if and only if $S\cup T=\Pi$.
\end{rmk}

\subsection{Compositions and meander graphs in type {\sf A}} \label{subs:A}
Let us recall the construction of meander graphs in type {\sf A}. It is more convenient here to work with 
$\gln$ in place  of $\sln$. A {\it composition\/} is a finite sequence of positive integers, say 
$\un{a}=(a_1,\dots,a_s)$. Set $|\un{a}|=\sum_i a_i$ and $\un{a}^{-1}=(a_s,\dots,a_1)$. 
We say that $\un{a}$ is a composition of $m$, if $|\un{a}|=m$. 

We work with the obvious triangular decomposition of $\gln$, where $\be$ consists of the 
upper-triangular matrices. If $S=\{\ap_{i_1},\dots,\ap_{i_{s-1}}\}$ and $\p(S) \supset\be$, then $\el(S)$ 
has the consecutive diagonal blocks 
$\mathfrak{gl}_{a_1}$, $\mathfrak{gl}_{a_2}$, \dots ,$\mathfrak{gl}_{a_s}$,
where $a_j=i_j-i_{j-1}$ with $i_0=0$, and $i_{s}=n$.
Then we write $\p(S)=\p(\un{a})$ and $\el(S)=\el(\un{a})$, where $\un{a}=(a_1,a_2,\dots,a_s)$. 
In particular, $\be=\p(1,\dots,1)$ and $\gln=\p(n)$.
Note that all $a_i \ge 1$ and $|\un{a}|=n$. Likewise, if $\p^-(T)\supset\be^-$ is similarly represented by 
the composition $\un{b}=(b_1,\dots,b_t)$ with $|\un{b}|= n$, then $\p^-(T)=\p^-(\un{b})$ and the standard 
seaweed $\q(S,T)\subset \gln$ is denoted by $\q^{\sf A}(\un{a}{\mid} \un{b})$.  A sample picture
is given in Fig.~\ref{fig:type-A}.

\begin{figure}[htb]
\begin{center}
\begin{tikzpicture}[scale= .5]
\draw (0,0)  rectangle (10,10);
\path[fill=brown!30,draw,line width=.3mm]  (0,10) -- (0,7) -- (3,7) -- (3,5.5) -- (4.5,5.5) -- (4.5,4.5) -- (5.5,4.5) -- (5.5,2) -- (8,2) -- (8,0)-- (10,0) -- (10,3) -- (7,3) -- (7,5) -- (5,5) -- (5,10) -- (0,10); 

\draw (-0.4,8.7)  node {\footnotesize {\color{darkblue}$a_1$}} ;
\draw (2.6,6.2)  node {\footnotesize {\color{darkblue}$a_2$}} ;
\draw (4.1,5)  node {\footnotesize {\color{darkblue}$a_3$}} ;
\draw (2.7,10.5)  node {\footnotesize {\color{darkblue}$b_1$}} ;
\draw (6.1,5.4)  node {\footnotesize {\color{darkblue}$b_2$}} ;
\draw (8.5,3.4)  node {\footnotesize {\color{darkblue}$b_3$}} ;
\draw[dashed,my_color]  (10,0) -- (0,10) ;
\end{tikzpicture}
\caption{A standard seaweed subalgebra of $\gln$} \label{fig:type-A}
\end{center}
\end{figure}
\noindent
The corresponding {\it type-{\sf A} meander graph\/} $\Gamma=\Gamma^{\sf A}(\un{a}{\mid}\un{b})$ 
is defined by the following rules:
\par\textbullet
\quad
$\Gamma$ has $n$ consecutive vertices on a horizontal line numbered from $1$ up to $n$.
\par\textbullet
\quad The parts of $\un{a}$ determine the set of pairwise disjoint arcs (edges) that are drawn \un{\sl below\/} the horizontal line.
Namely, part $a_1$ determines $[a_1/2]$ consecutively embedded arcs below the 
nodes $1,\dots,a_1$, where the widest arc joins vertices 1 and $a_1$, the following joins $2$ and $a_1-1$, etc. If $a_1$ is odd, then the middle vertex $(a_1+1)/2$ acquires no arc at all. 
Next,  part $a_2$ determines $[a_2/2]$ embedded
arcs below the nodes $a_1+1,\dots,a_1+a_2$, etc.
\par\textbullet
\quad The arcs corresponding to $\un{b}$ are drawn by the same rules,
but \un{\sl above\/} the horizontal line. 

It follows that the degree of each vertex in $\Gamma$ is at most $2$ and each connected component of 
$\Gamma$ is homeomorphic to either a circle or a segment. (An isolated vertex is also a segment!) 
By~\cite{dk00}, the index of $\q^{\sf A}(\un{a}{\mid} \un{b})$ is determined via
$\Gamma=\Gamma^{\sf A}(\un{a}{\mid}\un{b})$ as follows: 
\beq  \label{eq:index-A}
\ind\q^{\sf A}(\un{a}{\mid} \un{b})=2{\cdot}\text{(number of cycles in $\Gamma$)} + 
\text{(number of segments in $\Gamma$)} .
\eeq
Clearly, $\q^{\sf A}(\un{a}{\mid} \un{b})\simeq \q^{\sf A}(\un{b}{\mid} \un{a})\simeq
\q^{\sf A}(\un{a}^{-1}{\mid} \un{b}^{-1})$ and the corresponding graphs are also isomorphic.
For $\un{b}=(n)$, we obtain the meander graph for the parabolic $\p(\un{a})$; whereas for $\un{a}=(n)$, we get the meander graph for the parabolic $\p^-(\un{b})$.

\begin{rmk} \label{rem:ind-gl-sl}
Formula~\eqref{eq:index-A} gives the index of a seaweed in $\gln$, not in $\sln$. However, if 
$\q\subset\gln$ is a seaweed, then $\q\cap\sln$ is a seaweed in $\sln$ and the mapping 
$\q\mapsto \q\cap\sln$ is a bijection. Here $\q=(\q\cap\sln)\oplus (\text{1-dim centre of $\gln$})$, hence
$\ind(\q\cap\sln)=\ind\q-1$. Since $\ind\q^{\sf A}(\un{a}{\mid} \un{b})\ge 1$ and the minimal value `1' is 
achieved if and only if $\Gamma$ is a sole segment, we also obtain a characterisation of the Frobenius seaweeds in $\sln$ via meander graphs. 
\end{rmk}
\begin{ex}   \label{ex:old-A} 
$\Gamma^{\sf A}(2,4,3{\mid}  5,4)$=
\setlength{\unitlength}{0.021in}
\raisebox{-12\unitlength}{%
\begin{picture}(90,30)(-2,-11)
\multiput(0,3)(10,0){9}{\circle*{2}}

{\color{blue}
\put(20,5){\oval(40,20)[t]}
\put(20,5){\oval(20,10)[t]}
\put(65,5){\oval(10,6)[t]}
\put(65,5){\oval(30,15)[t]}

\put(5,1){\oval(10,6)[b]}
\put(35,1){\oval(30,15)[b]} 
\put(35,1){\oval(10,6)[b]}
\put(70,1){\oval(20,10)[b]}
}
\end{picture}
}  and the index of the corresponding seaweed in $\mathfrak{gl}_9$ (resp. $\mathfrak{sl}_9$) 
equals $3$ (resp. $2$).
\end{ex}

\subsection{Compositions and meander graphs in types {\sf B} and {\sf C}} 
\label{subs:BC}
For ${\sf B}_n$ or ${\sf C}_n$, {\bf any} standard parabolic $\p\subset\g$ has the standard Levi in 
the block-diagonal form, in the appropriate matrix realisation of $\g$. This associates a certain 
composition $\un{a}$ with $|\un{a}|\le n$ to $\p$, and this correspondence appears to be a bijection.
 
The idea that works fine 
for $\spn$ is that, for a standard parabolic 
$\p=\p(\un{a})\subset\spn$, one considers the ``symmetric'' composition 
$\un{\tilde a}:=(\un{a}, 2(n-|\un{a}|), \un{a}^{-1})$ of $2n$ and 
the ``symmetric'' parabolic $\tilde \p=\p(\un{\tilde a})$ in $\gltn$. Then $\p=\tilde\p\cap \spn$ and 
likewise for the opposite-standard parabolics. The {\it type-{\sf C} meander graph} of the seaweed 
$\p_1\cap\p_2^-\subset\spn$ is defined via the type-{\sf A} meander graph of
$\tilde\p_1\cap\tilde\p_2^-\subset\gltn$. 
Namely, letting $\Gamma^{\sf C}(\p_1\cap\p_2^-)=\Gamma^{\sf A}(\tilde\p_1\cap\tilde\p_2^-)$, 
we obtain a graph with $2n$ vertices, which is symmetric w.r.t. the middle. The symmetry w.r.t. the middle is denoted by $\sigma$. Then 
\beq  \label{eq:index-C}
\ind (\p_1\cap\p_2^-)=\#\{\text{the cycles of}\ \Gamma^{\sf C}\} +
\frac{1}{2}\#\{\text{the non-$\sigma$-stable segments of $\Gamma^{\sf C}$}\} ,
\eeq
see~\cite[Theorem\,3.2]{SW-C} for the details. With minor adjustments, this works for $\sono$, too.
\\ \indent
Because our type-{\sf B} explanations in \cite{SW-C} are rather sketchy, we provide an intrinsic 
construction of the type-{\sf B} meander graphs. (This is going to be helpful for our next exposition in type {\sf D}, where some difficulties occur.)
We think of $\sono$ as the set of skew-symmetric matrices w.r.t{.} the antidiagonal. 
The triangular decomposition of $\sono$ is induced by that of $\gltno$, and
we deal with the usual numbering of simple roots, so that $\ap_i=\esi_i-\esi_{i+1}$ for $i< n$ and 
$\ap_n=\esi_n$. If  $\Pi\setminus S=\{\ap_{i_1},\dots, \ap_{i_s}\}$, then the consecutive diagonal blocks 
of the standard Levi $\el(S)$ are 
\\[.6ex]    \centerline{
$\gt{gl}_{i_1}$, $\gt{gl}_{i_2-i_1}$,\dots, $\gt{gl}_{i_s-i_{s-1}}$, 
$\gt{so}_{2(n-i_s)+1}$, and then {\it the same $\gt{gl}$-blocks in the reverse order}.}
\\[.6ex]
\noindent
Here (and below) the words "the same $\gt{gl}$-blocks" refer not only to the size, but also to the fact that the resulting matrices must be skew-symmetric w.r.t. the antidiagonal.

The associated composition is $\un{a}=(i_1,i_2-i_1,\dots,i_s-i_{s-1})$ with  $|\un{a}|=i_s\le n$, and we 
also write
$\p(\un{a})$ for $\p(S)$. If $S=\Pi$ and $\p(\Pi)=\sono$, then the associated composition is empty, with sum $0$. This yields a {\bf bijection} 
between the standard parabolics and the compositions with sum at most $n$. Consequently, any 
standard seaweed in $\sono$ has a symmetric (w.r.t. the antidiagonal) ``seaweed shape'' and can be 
identified with a pair of compositions. (See Fig.~\ref{fig:sono}, where $\un{a}=(a_1,a_2)$ and 
$\un{b}=(b_1)$.)  
\\ \indent
To define the type-{\sf B} meander graphs, we use the embedding $\sono\subset\gltno$. For a standard 
parabolic $\p=\p(\un{a})\subset\sono$, consider the ``symmetric'' composition 
$\un{\hat a}:=(\un{a}, 2(n-|\un{a}|)+1, \un{a}^{-1})$ of $2n+1$ and the ``symmetric'' parabolic 
$\hat \p=\p(\un{\hat a})$ in $\gltno$. Then $\p=\hat\p\cap \sono$ and 
likewise for the opposite-standard parabolics. The {\it type-{\sf B} meander graph} of the seaweed 
$\p_1\cap\p_2^-\subset\sono$ is defined via the type-{\sf A} meander graph of
$\hat\p_1\cap\hat\p_2^-\subset\gltno$. 
Namely, letting $\Gamma^{\sf B}(\p_1\cap\p_2^-)=\Gamma^{\sf A}(\hat\p_1\cap\hat\p_2^-)$, 
we obtain a graph with $2n+1$ vertices, which is symmetric w.r.t. the middle. Then Eq.~\eqref{eq:index-C}
remains true in type {\sf B}, with the same proof. Since the middle part of the symmetric composition $\un{\hat a}$ is odd, the middle vertex $n+1$ remains isolated in $\Gamma^{\sf A}(\dots)$ for {\bf all} seaweeds in $\sono$.
It is also a $\sigma$-stable segment, which is not counted in the {\sf B}-analogue of 
Eq.~\eqref{eq:index-C}. Therefore, this middle vertex can safely be removed from the type-{\sf B} meander graphs, which yields exactly the same graphs as in type {\sf C}. Thus, we arrive at conclusion (1) made
in p.~498 in \cite{SW-C}. But this time we see the reason behind it.

\begin{figure}[htb]
\begin{center}
\begin{tikzpicture}[scale= .5]
\draw (0,0)  rectangle (10,10);
\path[fill=brown!30,draw,line width=.3mm]  (0,10) -- (0,7.5) -- (2.5,7.5) -- (2.5,6) -- (4,6) -- (4,4) -- (6,4) -- (6,2.5) -- (7.5, 2.5) -- (7.5, 0) -- (10,0) -- (10,3.5) -- (6.5,3.5) -- (6.5,6.5) -- (3.5,6.5) -- (3.5,10) -- (0,10); 

\draw (-0.4,8.7)  node {\footnotesize {\color{darkblue}$a_1$}} ;
\draw (2.1,6.6)  node {\footnotesize {\color{darkblue}$a_2$}} ;
\draw (8.9,-0.5)  node {\footnotesize {\color{darkblue}$a_1$}} ;
\draw (6.8,2.1)  node {\footnotesize {\color{darkblue}$a_2$}} ;
\draw (1.8,10.5)  node {\footnotesize {\color{darkblue}$b_1$}} ;
\draw (10.5,1.8)  node {\footnotesize {\color{darkblue}$b_1$}} ;
\draw[dashed,my_color]  (10,0) -- (0,10) ;
\draw[dashed,magenta]  (0,0) -- (10,10) ;
\end{tikzpicture}
\caption{A standard seaweed in $\sono$}
\label{fig:sono}
\end{center}
\end{figure}

However, there is neither such a uniform bijection nor a simple construction of meander graphs in 
type {\sf D}, and the reason is that the Dynkin diagram has a branching node. 

\section{Compositions and meander graphs in type {\sf D}}
\label{sect:D}
\noindent
We think of $\sone$ as the set of skew-symmetric matrices {w.r.t.}~the antidiagonal.
Since $\gt{so}_2\simeq\gt{gl}_1$, $\gt{so}_4\simeq\tri\oplus\tri$, and $\gt{so}_6\simeq\gt{sl}_4$, we may assume that $n\ge 4$. However, these small rank cases may appear in our future reduction procedure.
The triangular decomposition of $\g=\sone$ is induced by that of $\gltn$. In particular, 
\\[.6ex]
\centerline{
$\be=\sone\cap \{ \text{the upper-triangular matrices in $\gltn$}\}$
}
\\[.6ex]  \noindent
is the fixed Borel subalgebra of $\sone$ and 
$\te=\{\mathsf{diag}(\esi_1,\dots,\esi_n,-\esi_n,\dots,-\esi_1)\}$. Then
$\ap_i=\esi_i-\esi_{i+1}$ for $i<n$, and $\ap_n=\esi_{n-1}+\esi_n$. 
 
\subsection{Parabolic subalgebras and compositions} 
\label{subs:parab-comp}
The first trouble is that if $\p=\p(S)$ or $\p^-(S)$ with $\ap_{n-1}\not\in S$ and $\ap_n\in S$, then $\el(S)$ 
does {\bf not} have a block diagonal matrix form, see Fig.~\ref{fig:non-block-D}. Here 
one can swap $\ap_{n-1}$ and $\ap_n$, which provides an "admissible" subset of $\Pi$ and an isomorphic 
parabolic. This swapping can be understood as changing the matrix realisation of $\sone$. 
But this does not always help in case of seaweeds, i.e., pairs of parabolics. If $\q=\q(S,T)$ is 
such that $\ap_{n-1}\in T\setminus S$ and  $\ap_{n}\in S\setminus T$, then swapping changes nothing 
and $\q$ does not have a ``seaweed shape'',  as in Fig.~\ref{fig:sono}. This phenomenon was overlooked 
in~\cite{Dima01}. 
To realise other possible difficulties, let us consider in more details the interrelation 
between standard parabolics of $\sone$ and compositions.

\begin{prop}   \label{prop:D-parab-and-comp}
Let\/ $\p(S)\subset\sone$ be a standard parabolic. Then
 
{\sf (1)} \ $\p(S)$ does not have a block triangular form if and only if $\ap_{n-1}\not\in S$ and 
$\ap_n\in S$.

{\sf (2)} \ In all other cases, using the block triangular form, one naturally associates to $\p(S)$ a 
composition $\un{a}$ with $|\un{a}|\le n$ and $|\un{a}|\ne n-1$. More precisely, 
\\ \indent
 \quad  {\sf (i)} \ if $\ap_{n-1},\ap_n\in S$, then $|\un{a}|\le n-2$;
\\ \indent   
\quad   {\sf (ii)} \ if $\ap_n\not\in S$, then $|\un{a}|= n$.
\end{prop}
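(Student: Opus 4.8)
The plan is to translate the combinatorics of $S\subset\Pi$ into an incidence relation on the $2n$ coordinate positions carrying the weights $\esi_1,\dots,\esi_n,-\esi_n,\dots,-\esi_1$, and then read off the block structure of $\el(S)$ from the resulting connected components. First I record the dictionary: a simple root $\ap_i$ with $i<n$ lies in $S$ exactly when it joins the adjacent positions $i\leftrightarrow i{+}1$ (together with the antidiagonal mirror $2n{-}i\leftrightarrow 2n{+}1{-}i$), whereas $\ap_n=\esi_{n-1}+\esi_n$ joins the non-adjacent positions $n{-}1\leftrightarrow n{+}1$ and $n\leftrightarrow n{+}2$. The decisive observation is that $2\esi_n$ is not a root, so positions $n$ and $n{+}1$ are never directly joined; consequently $\ap_n$ is the \emph{only} simple root producing a non-consecutive link. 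By definition $\p(S)$ has block triangular form precisely when every connected component (block) of this incidence relation is an interval of consecutive positions, and by the antidiagonal symmetry the blocks automatically come in mirror pairs, with a possible self-paired central block straddling the middle.

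For part {\sf (1)}, the intervality criterion makes the dichotomy transparent. If $\ap_n\notin S$, then every link is consecutive, so every block is an interval and $\p(S)$ is block triangular. If $\ap_n\in S$ and $\ap_{n-1}\in S$, then the links $n{-}1\leftrightarrow n$, $n{-}1\leftrightarrow n{+}1$, $n\leftrightarrow n{+}2$ fuse $\{n{-}1,n,n{+}1,n{+}2\}$ into a single interval, and again all blocks are intervals. The remaining case $\ap_n\in S$, $\ap_{n-1}\notin S$ is exactly the obstruction: the component of $n{-}1$ then contains $n{+}1$ but not $n$, so it is not an interval and $\p(S)$ has no block triangular form. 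This proves the equivalence in {\sf (1)}.

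For part {\sf (2)}, in each block-triangular case I define $\un{a}=(a_1,\dots,a_s)$ to be the list of sizes of the $\gt{gl}$-blocks occupying the top-left positions, read from left to right up to the central block; each such mirror pair of $\gt{gl}$-blocks contributes one Levi factor $\gt{gl}_{a_j}$, and the self-paired central block (if present) is $\gt{so}_{2m}$ with $m=n-|\un{a}|$. In case {\sf (ii)}, $\ap_n\notin S$ forces the middle to be disconnected across positions $n,n{+}1$, so the positions $1,\dots,n$ are exhausted by $\gt{gl}$-blocks and there is no central $\gt{so}$-factor; hence $m=0$ and $|\un{a}|=n$. In case {\sf (i)}, $\ap_{n-1},\ap_n\in S$ forces the central block to contain $\{n{-}1,n,n{+}1,n{+}2\}$, whence $m\ge 2$ and $|\un{a}|=n-m\le n-2$. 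The value $m=1$, i.e.\ $|\un{a}|=n-1$, cannot occur in either case: a central $\gt{so}_2$ would have to be the single block on positions $\{n,n{+}1\}$, yet these are never joined. Together this gives $|\un{a}|\le n$ and $|\un{a}|\ne n-1$.

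The step I expect to be the main obstacle is the antidiagonal bookkeeping: one must verify that a mirror pair of $\gt{gl}$-blocks really contributes a single $\gt{gl}$ Levi factor (the lower block being the negative anti-transpose of the upper), and that when $\ap_{n-1},\ap_n\in S$ the symmetric central component is a genuine $\gt{so}$ (type-{\sf D}) block, with $\gt{so}_4\simeq\tri\oplus\tri$ as the smallest instance, rather than two separate mirror blocks. Both facts follow from tracking how the skew-symmetry w.r.t.\ the antidiagonal pairs off the non-central positions, but this is precisely the point where the branching of the Dynkin diagram, encoded in the single skipping link created by $\ap_n$, must be used explicitly.
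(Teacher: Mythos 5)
Your proposal is correct and follows essentially the same route as the paper: the paper simply writes down the diagonal blocks of $\el(S)$ in each admissible case and reads off the composition, declaring part {\sf (1)} obvious from the matrix picture. Your incidence-relation/interval criterion is just a careful formalisation of that picture (the key point in both being that $\ap_n$ links positions $n{-}1\leftrightarrow n{+}1$ and $n\leftrightarrow n{+}2$ while $n$ and $n{+}1$ are never joined), and your block counts in {\sf (2i)}, {\sf (2ii)} agree with the paper's.
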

\begin{proof} (1) \ Obvious. E.g. see Fig.~\ref{fig:non-block-D} for $\Pi\setminus S=\{\ap_{n-1}\}$.
\\ \indent
(2i) \ If $\ap_{n-1},\ap_n\in S$, then $\Pi\setminus S=\{\ap_{i_1},\dots,\ap_{i_s}\}$ with 
$i_1< \dots < i_s\le n-2$ and the consecutive diagonal blocks of $\el(S)$ are
$\mathfrak{gl}_{i_1}$, $\mathfrak{gl}_{i_2-i_1}$,\dots, $\mathfrak{gl}_{i_s-i_{s-1}}$, 
$\mathfrak{so}_{2(n-i_s)}$, and then the same $\mathfrak{gl}$-blocks in the reverse order.
Then $\un{a}:=(i_1,i_2-i_1,\dots,i_s-i_{s-1})$ and hence
$|\un{a}|=i_s$.
\\ \indent
(2ii) \ If $\ap_n\not\in S$, then $\Pi\setminus S=\{\ap_{j_1},\dots,\ap_{j_s}, \ap_n\}$ with
$j_s\le n-1$. Here the consecutive diagonal blocks  of $\el(S)$ are
$\mathfrak{gl}_{j_1}$, $\mathfrak{gl}_{j_2-j_1}$,\dots, $\mathfrak{gl}_{j_s-j_{s-1}}$, 
$\mathfrak{gl}_{n-j_s}$, and then the same $\mathfrak{gl}$-blocks in the reverse order.
Then $\un{a}=(j_1,j_2-j_1,\dots,j_s-j_{s-1}, n-j_s)$ with $|\un{a}|=n$.
\end{proof}

\begin{figure}[htb]
\begin{center}
\begin{tikzpicture}[scale= .6]
\draw (0,0)  rectangle (10,10);
\path[draw]  (4,0) -- (4,10); \path[draw]  (5,0) -- (5,10); \path[draw]  (6,0) -- (6,10);

\path[draw]  (0,4) -- (10,4); \path[draw]  (0,5) -- (10,5); \path[draw]  (0,6) -- (10,6);

\path[draw,fill=gray!20]  (0,6) -- (4,6) -- (4,10) -- (0,10)--cycle ;
\path[draw,fill=gray!20]  (6,0) -- (6,4) -- (10,4) -- (10,0)--cycle ;
\path[draw,fill=gray!20]  (4,0) -- (5,0) -- (5,4) -- (4,4)--cycle ;
\path[draw,fill=gray!20]  (5,6) -- (6,6) -- (6,10) -- (5,10)--cycle ;
\path[draw,fill=gray!20]  (0,4) -- (4,4) -- (4,5) -- (0,5)--cycle ;
\path[draw,fill=gray!20]  (6,5) -- (10,5) -- (10,6) -- (6,6)--cycle ;
\path[draw,fill=gray!20]  (4,5) -- (5,5) -- (5,6) -- (4,6)--cycle ;
\path[draw,fill=gray!20]  (5,4) -- (5,5) -- (6,5) -- (6,4)--cycle ;
\draw (0.4,9.6)  node {\footnotesize {\color{darkblue}$\esi_1$}} ;
\draw (3.4,6.3)  node {\footnotesize {\color{darkblue}$\esi_{n{-}1}$}} ;
\draw (4.5,5.5)  node {\footnotesize {\color{darkblue}$\esi_n$}} ;
\draw (5.5,4.5)  node {\footnotesize {\color{darkblue}$-\esi_n$}} ;
\draw (6.6,3.6)  node {\footnotesize {\color{darkblue}$-\esi_{n{-}1}$}} ;
\draw (9.5,0.4)  node {\footnotesize {\color{darkblue}$-\esi_1$}} ;
\draw[dashed,darkblue]  (0.7,9.3) -- (3.4,6.6) ;
\draw[dashed,darkblue]  (6.7,3.3) -- (9.4,0.6) ;
\draw[dashed,magenta]  (0,0) -- (10,10) ;
\end{tikzpicture}
\caption{The standard Levi $\el(S)\subset\sone$ with $\Pi\setminus S=\{\ap_{n-1}\}$} 
\label{fig:non-block-D}
\end{center}
\end{figure}

\begin{ex}
{\sf (i)} \ For the fixed Borel $\be$, Proposition~\ref{prop:D-parab-and-comp} yields $\un{a}=(1,\dots,1)=:(1^n)$; 
\\
{\sf (ii)} \ if $S=\Pi\setminus\{\ap_n\}$, then $\un{a}=(n)$;
\\
{\sf (iii)} \ 
$\p=\sone$ corresponds to the empty composition `$\varnothing$' with sum $0$.
 \end{ex}

\begin{df}   \label{df:admissible}
A subset $S\subset\Pi$ and the parabolics $\p(S), \p^-(S)\subset\sone$ are said to be {\it admissible}, 
if \ref{prop:D-parab-and-comp}(1) does not hold, i.e., either $\ap_{n-1},\ap_n\in S$ or $\ap_n\not\in S$.
\end{df}

By Proposition~\ref{prop:D-parab-and-comp}, to any standard (or opposite standard) admissible parabolic 
$\p$ in $\sone$ one naturally associates the composition $\un{a}$ with  $|\un{a}|\le n$ and 
$|\un{a}|\ne n-1$. 
There is a sort of 
inverse procedure that associates a standard admissible parabolic in $\sone$ to {\bf any} composition 
$\un{a}$ with  $|\un{a}|\le n$. Given $\un{a}=(a_1,\dots,a_s)$, we define the ``symmetric'' composition of 
$2n$ by $\un{\tilde a}:=(\un{a}, 2d, \un{a}^{-1})$, where $d=n-|\un{a}|$. Let $\p^{\sf A}(\un{\tilde a})$ be the 
standard ``symmetric'' parabolic in $\gltn$.  Then we associate to $\un{a}$ the admissible parabolic 
$\p(\un{a}):=\p^{\sf A}(\un{\tilde a})\cap \sone$. The standard Levi in $\el(\un{a})\subset \p(\un{a})$ has the 
consecutive diagonal blocks 
$\gt{gl}_{a_1}$, $\gt{gl}_{a_2}$,\dots, $\gt{gl}_{a_s}$, 
$\gt{so}_{2d}$, and then the same $\gt{gl}$--blocks in the reverse order. 
Hence, for $|\un{a}|\ne n-1$, we get the inverse map to one constructed in 
Proposition~\ref{prop:D-parab-and-comp}(ii).

\begin{rmk}   \label{rmk:exclude}
Since $\gt{so}_{2}\simeq\gt{gl}_1$, the compositions $\un{a}'$ with $|\un{a}'|=n-1$  and 
$\un{a}=(\un{a}',1)$ determine one and the same parabolic in $\sone$. For, $\gt{so}_2$ 
appearing as the middle block of $\el(\un{a}')$ is also the last $\gt{gl}_1$ contained in 
$\el(\un{a})\subset \gln=\el(\Pi\setminus\{\ap_n\})\subset \sone$. That is, some admissible $S\subset\Pi$ 
give rise to two standard symmetric parabolics $\tilde\p, \tilde\p'$ in $\gltn$ such that 
$\p(S)=\tilde\p\cap\sone=\tilde\p'\cap\sone$.
More precisely, this happens if and only if neither $\ap_{n-1}$ nor $\ap_n$ belongs to $S$.
\\ \indent
For this reason, we {\bf exclude} the compositions of $n-1$ from the further consideration.
\end{rmk}

\begin{df}     \label{df:crossing}
Let us say that $\q(S,T)=\p(S)\cap\p^-(T)$ is a seaweed {\it with crossing} (=\,has a crossing), if $\alpha_{n-1}\in T\setminus S$ and 
$\alpha_{n}\in S\setminus T$ (or vice versa). In the other cases, $\q(S,T)$ is said to be a seaweed 
{\it without crossing} (=\,has no crossing).
\end{df}

The full  meaning of these terms will be clarified below when we introduce the meander graphs for seaweeds with or without crossing.

\subsection{Seaweeds without crossing, compositions,  and meander graphs} 
\label{subs:without}

\begin{prop}    \label{prop:without}
Suppose that\/ $\q(S,T)$ has no crossing. 
\\ \indent
{\sf (i)} \ Then, up to permutation of $\ap_{n-1}$ and $\ap_n$, we may assume that both $S$ and $T$ are 
admissible and $\p(S)=\p(\un{a})$, $\p^-(T)=\p^-(\un{b})$.
In particular, $\q(S,T)$ has a ``seaweed shape''.
\\ \indent
{\sf (ii)} \ If\/ $S\cup T\supset \{\ap_{n-1},\ap_n\}$,  
then we may assume that $|\un{a}|\ne n-1$ and $|\un{b}|\le n-2$.
\\ \indent
{\sf (iii)} \ If\/ $\ap_{n-1}$ or $\ap_n$ does not belong to $S\cup T$, then $\q(S,T)$ lies in a Levi isomorphic to $\gln$. Here $\q(S,T)$ is given by two compositions with $|\un{a}|=|\un{b}|=n$.
\end{prop}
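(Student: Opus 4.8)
The whole proof is a bookkeeping argument about the position of the two branch roots $\ap_{n-1},\ap_n$ relative to $S$ and $T$, exploiting the two symmetries at our disposal: the isomorphism $\q(S,T)\simeq\q(T,S)$ and the diagram automorphism $\tau$ of $\sone$ that interchanges $\ap_{n-1}$ and $\ap_n$ (which, as noted before Proposition~\ref{prop:D-parab-and-comp}, amounts to a change of matrix realisation and sends $\q(S,T)$ to $\q(\tau S,\tau T)$). For a subset $R\subset\Pi$ I record the pattern of membership of $(\ap_{n-1},\ap_n)$ in $R$; by Definition~\ref{df:admissible} the sole non-admissible pattern is ``$\ap_{n-1}\notin R$ and $\ap_n\in R$'', and $\tau$ swaps it with the admissible pattern ``$\ap_{n-1}\in R$ and $\ap_n\notin R$'', while fixing the patterns ``both in'' and ``both out''.

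For (i) I would argue by cases on which of $S,T$ is non-admissible. If neither is, there is nothing to do. Suppose $S$ is non-admissible, so $\ap_{n-1}\notin S$ and $\ap_n\in S$. Then applying $\tau$ repairs $S$, and the key point is that it does not spoil $T$: since $\ap_{n-1}\notin S$ and $\ap_n\in S$, the no-crossing hypothesis (Definition~\ref{df:crossing}) rules out the pattern ``$\ap_{n-1}\in T$ and $\ap_n\notin T$'', which is exactly the pattern that $\tau$ would convert into the non-admissible one; hence $\tau T$ is admissible. The case where $T$ is the non-admissible one is symmetric, and if both $S,T$ are non-admissible they share the pattern ``$\ap_{n-1}\notin,\ \ap_n\in$'' (which is no crossing), so a single $\tau$ cures both. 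In every case one permutation of $\ap_{n-1},\ap_n$ makes $S,T$ simultaneously admissible; Proposition~\ref{prop:D-parab-and-comp} then supplies $\un{a},\un{b}$ with $\p(S)=\p(\un{a})$ and $\p^-(T)=\p^-(\un{b})$, and the intersection acquires the symmetric seaweed shape of Fig.~\ref{fig:sono}.

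For (ii) I would first observe that admissibility already forces $|\un{a}|\ne n-1$ by Proposition~\ref{prop:D-parab-and-comp}, so only $|\un{b}|\le n-2$ needs arranging; by part~(2i) of that proposition this is equivalent to $\ap_{n-1},\ap_n\in T$. Once $S,T$ are admissible, each falls into one of the three patterns ``both in'', ``$\ap_{n-1}\in,\ \ap_n\notin$'', ``both out''; among these only ``both in'' has $\ap_n\in$, so the hypothesis $S\cup T\supset\{\ap_{n-1},\ap_n\}$ (which requires $\ap_n\in S\cup T$) forces at least one of $S,T$ to have pattern ``both in''. If it is $T$, we are done. If it is $S$, I would swap the two parabolics via $\q(S,T)\simeq\q(T,S)$: this moves the ``both in'' pattern onto $T$, while the new first subset (the old admissible $T$) still gives $|\un{a}|\ne n-1$. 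Either way $\ap_{n-1},\ap_n\in T$ and hence $|\un{b}|\le n-2$.

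For (iii) the hypothesis that $\ap_{n-1}$ or $\ap_n$ lies outside $S\cup T$ already implies no crossing, since a crossing requires both branch roots in $S\cup T$. If $\ap_n\notin S\cup T$, then Remark~\ref{rem:union=Pi} places $\q(S,T)$ inside $\el(\Pi\setminus\{\ap_n\})\simeq\gln$; as $\ap_n\notin S$ and $\ap_n\notin T$, Proposition~\ref{prop:D-parab-and-comp}(2ii) yields $|\un{a}|=|\un{b}|=n$, and $\q(S,T)$ is precisely the type-{\sf A} seaweed $\q^{\sf A}(\un{a}\mid\un{b})$ in this $\gln$. If instead only $\ap_{n-1}\notin S\cup T$, I would apply $\tau$ first: because $\ap_{n-1}\notin S$ and $\ap_{n-1}\notin T$, the images satisfy $\ap_n\notin\tau S\cup\tau T$, reducing to the previous situation. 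The main obstacle is the case analysis in (i): one must verify that the single global swap $\tau$ can be chosen to cure $S$ and $T$ at once, and it is exactly here that the no-crossing hypothesis is indispensable—without it the two subsets could demand opposite swaps, which is the combinatorial shadow of the genuinely non-seaweed-shaped algebras treated separately as seaweeds with crossing.
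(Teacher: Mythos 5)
Your proof is correct and follows essentially the same route as the paper's: a case analysis on the membership patterns of $\ap_{n-1},\ap_n$ in $S$ and $T$, using the swap of $\ap_{n-1}$ and $\ap_n$ together with $\q(S,T)\simeq\q(T,S)$ and then invoking Proposition~\ref{prop:D-parab-and-comp} to produce the compositions. You merely spell out in more detail the point the paper asserts tersely in (i) (that no-crossing guarantees one swap cures both subsets), and in (iii) you apply the swap first where the paper instead works directly inside $\el(\Pi\setminus\{\ap_{n-1}\})\simeq\gln$; both are immaterial variations.
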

\begin{proof}
(i) \ If  at least one of the subsets $S, T$ is not admissible, then swapping  $\ap_{n-1}$ and $\ap_n$
makes both of them admissible, since $\q(S,T)$ has no crossing. Then  
$\un{a}$ and $\un{b}$ can independently be constructed as in Prop.~\ref{prop:D-parab-and-comp}.

(ii) \  Since $\q(S,T)$ has no crossing, we may assume w.l.o.g. that $\ap_{n-1},\ap_n\in T$, hence 
$\Pi\setminus T=\{\ap_{j_1},\dots,\ap_{j_t}\}$ and $j_t\le n-2$. Here $\p^-(T)=\p^-(\un{b})$ with 
$\un{b}=(j_1,j_2-j_1,\dots,j_t-j_{t-1})$ and
$|\un{b}|=j_t$, see Prop.~\ref{prop:D-parab-and-comp}(2i).
Then there are three possibilities for $S$:

{\it\bfseries (1)} \ If $\ap_{n-1},\ap_n\in S$, then we construct the composition $\un{a}$ for 
$\p(S)$ by the same rule. Here $|\un{a}|\le n-2$ as well.

{\it\bfseries (2)} \ If  $\ap_n\not\in S$, then $\Pi\setminus S=\{\ap_{i_1},\dots,\ap_{i_s}, \ap_n\}$ with
$i_s\le n-1$. Here the corresponding composition is $\un{a}=(i_1,i_2-i_1,\dots,i_s-i_{s-1}, n-i_s)$ with $|\un{a}|=n$, see Prop.~\ref{prop:D-parab-and-comp}(2ii).

{\it\bfseries (3)} \  If $\ap_{n-1}\not \in S$ and $\ap_n\in S$, then $S$ is not admissible.
But there is no harm in swapping $\ap_{n-1}$ and $\ap_n$. This does not 
change $\p^-(T)$ and yields an isomorphic seaweed, as in {\it\bfseries (2)}.

(iii) \ Regarding $S,T$ as subsets of the set of simple roots of $\gln=\el(\Pi\setminus\{\ap_{n-1}\})$ or
$\el(\Pi\setminus\{\ap_{n}\})$, we construct the required compositions of $n$ as explained in Section~\ref{subs:A}.
\end{proof}

A seaweed without crossing $\q(S,T)\subset\sone$ is also denoted by $\q_n(\un{a}|\un{b})$, where 
$\un{a}=\un{a}(S)$ and $\un{b}=\un{b}(T)$ are the associated compositions (constructed in 
Proposition~\ref{prop:without}) such that $|\un{a}|\ne n-1$ and $|\un{b}|\ne n-1$. Given $\un{a}$ and 
$\un{b}$, we form the symmetric compositions $\un{\tilde a}$ and $\un{\tilde b}$ of $2n$, as above.
Let $\p^{\sf A}(\un{\tilde a})$ and $\p^{{\sf A},-}(\un{\tilde b})$ be the corresponding standard 
"symmetric" parabolics in 
$\gltn$, $\q^{\sf A}(\un{\tilde a}|\un{\tilde b})=\p^{\sf A}(\un{\tilde a})\cap\p^{{\sf A},-}(\un{\tilde b})$ the  
standard seaweed in $\gltn$, and  $\Gamma^{\sf A}(\un{\tilde a}| \un{\tilde b})$ the corresponding 
type-{\sf A} meander graph.

\begin{df}    \label{def:graf-bez}
Let $\q=\q(S,T)\subset \sone$ be a seaweed {\bf without} crossing.  If 
$\un{a}=\un{a}(S)$ and $\un{b}=\un{b}(T)$ are the associated compositions, 
then the {\it type-{\sf D} meander graph\/} of $\q$ is
\[
 \Gamma_n(\q)=\Gamma_n(\un{a}|\un{b}):=\Gamma^{\sf A}(\un{\tilde a}| \un{\tilde b})
\] 
We also write $\Gamma_n(S,T)$ for this graph. (Note that $|\un{a}|\ne n-1$ and $|\un{b}|\ne n-1$.)
\end{df}
\begin{rmk}
Because different arcs in the type-{\sf A} meander graphs do not cross each other, the same holds for
the type-{\sf D} meander graph of a seaweed without crossing.
\end{rmk}

\begin{ex}   \label{ex:ala-Kos}
Suppose that $|\un{a}|=n$,  $|\un{b}|=n$, and $\q=\q_n(\un{a}|\un{b})$. Then 
$\q\subset\gln\subset\sone$, where $\gln=\el(\Pi\setminus\{\ap_n\})$. Here $\Gamma_n(\un{a}| \un{b})$ 
consists of two disjoint halves that are symmetric w.r.t. the middle. The first (resp. second) half represents 
the meander graph of the seaweed  $\q^{\sf A}(\un{a}|\un{b})$ (resp. $\q^{\sf A}(\un{a}^{-1}|\un{b}^{-1})$) 
in $\gln$. For instance, if $\un{a}=(2,2,1), \un{b}=(3,2)$, and $n=5$, then $S=\{\ap_1,\ap_3\}$, 
$T=\{\ap_1,\ap_2,\ap_4\}$ and the meander graph for $\q(S,T)$ is depicted in Fig.~\ref{pikcha:D5-kos}.
\begin{figure}[htb]
\setlength{\unitlength}{0.025in}
\begin{center}
\begin{picture}(115,18)(5,-2)
\put(-25,1){\small $\Gamma_5(2,2,1| 3,2)$:}
\multiput(20,3)(10,0){10}{\circle*{2}}

{\color{blue}
\put(30,5){\oval(20,10)[t]}
\put(55,5){\oval(10,5)[t]}
\put(75,5){\oval(10,5)[t]}
\put(100,5){\oval(20,10)[t]}

\put(25,1){\oval(10,5)[b]}
\put(45,1){\oval(10,5)[b]}
\put(85,1){\oval(10,5)[b]}
\put(105,1){\oval(10,5)[b]}
}
\qbezier[22](65,-8),(65,0),(65,12)
\end{picture}
\end{center}
\caption{}
\label{pikcha:D5-kos}
\end{figure}
\end{ex}
\begin{ex}   \label{ex:graph-b}
By Proposition~\ref{prop:without}, the associated compositions for the Borel 
$\be=\q(\varnothing,\Pi)$ are $\un{a}=(1^n)$ and $\un{b}=\varnothing$. 
Here $\tilde{\un{a}}=(1^{2n})$ and $\tilde{\un{b}}=(2n)$.
Therefore, $\Gamma_n(\be)$ has $n$ embedded arcs over the horizontal line and no arcs  
below the horizontal line, see Fig.~\ref{pikcha:Borel} with $n=5$.

\begin{figure}[htb]
\setlength{\unitlength}{0.025in}
\begin{center}
\begin{picture}(115,26)(0,0)

\multiput(20,3)(10,0){10}{\circle*{2}}

{\color{blue}
\put(65,5){\oval(90,40)[t]}
\put(65,5){\oval(70,30)[t]}
\put(65,5){\oval(50,20)[t]}
\put(65,5){\oval(30,10)[t]}
\put(65,5){\oval(10,5)[t]}
}
\qbezier[35](65,-5),(65,12),(65,29)
\put(18,-4){\footnotesize $1$}
\put(28,-4){\footnotesize $2$}
\put(38,-4){\footnotesize $3$}
\put(48,-4){\footnotesize $4$}
\put(57,-4.5){\footnotesize $5$}
\put(70,-4.5){\footnotesize $6$}
\put(79,-4){\footnotesize $7$}
\put(89,-4){\footnotesize $8$}
\put(99,-4){\footnotesize $9$}
\put(108,-4){\footnotesize $10$}
\end{picture}
\end{center}
\caption{The type-{\sf D} meander graph for  $\be\subset\gt{so}_{10}$}   \label{pikcha:Borel}
\end{figure}
\end{ex}

The type-{\sf D} meander graphs are symmetric w.r.t. the vertical line between the $n$-th and $(n+1)$-th 
vertices (this holds for the seaweeds with or without crossing). The symmetry w.r.t. this line is denoted by 
$\sigma$ and this line is said to be the $\sigma$-{\it mirror}. This line is depicted by the dotted line in the 
figures.

\subsection{Seaweeds with crossing and their meander graphs}   \label{subs:with}
Quite a different situation occurs if $\q$ has a crossing. 
The three steps of our definition/construction of $\Gamma_n(\q)$ are:

{\sf 1)} If $\q(S,T)$ has a crossing and $S$ is not admissible, then $S$ is replaced with $\check{S}$, so that 
$\check{\q}=\q(\check{S},T)$ has no crossing.

{\sf 2)} Following 
Definition~\ref{def:graf-bez}, we construct the meander graph $\Gamma_n(\check{S},T)$ .

{\sf 3)} We make a certain alteration in $\Gamma_n(\check{S},T)$, and the resulting graph is defined 
to be the meander graph of $\q(S,T)$.

For {\sf 1)}: \ Let $\q=\q(S,T)$ be a seaweed with crossing and $\alpha_{n-1}\in T\setminus S$, 
$\alpha_{n}\in S\setminus T$. The admissible subset $T$ gives rise to a composition 
$\un{b}=(b_1,\dots,b_t)$, see Proposition~\ref{prop:D-parab-and-comp}. As $S$ is not admissible, 
we replace $\ap_{n}$ with $\ap_{n-1}$ in it. This yields an admissible subset 
$\check{S}$ and the corresponding composition $\un{a}=(a_1,\dots,a_s)$. 
(We do not change $T$!)
The structure of $\check{S}$ and $T$ shows that $|\un{a}|=|\un{b}|=n$, 
$a_s\ge 2$, and $b_t\ge 2$, cf. the proof of Proposition~\ref{prop:D-parab-and-comp}(2ii).
Note that $\ap_n\not\in\check{S}\cup T$, hence $\check{\q}=\q(\check{S},T)$ lies in the Levi 
$\el(\Pi\setminus\{\ap_n\})\simeq \gln$.
\\ \indent
For {\sf 2)}: \ Since $\check{\q}$ has no crossing, we obtain the meander graph 
$\Gamma_n(\check{\q})=\Gamma_n(\un{a},\un{b})$. It consists of two symmetric copies of the type-{\sf A} 
meander graphs for seaweeds in $\gln$ (cf. Example~\ref{ex:ala-Kos} and Fig.~\ref{pikcha:D5-kos}). Recall 
that the arcs below (resp{.}~over) the horizontal line are determined by a symmetric composition of $2n$, 
which in our case is $(\un{a},\un{a}^{-1})$ (resp{.}~$(\un{b},\un{b}^{-1})$). But this is not $\Gamma_n(\q)$
yet. As $S$ has been changed, we have to reflect this in the graph. 
\\ \indent
For {\sf 3)}: \ 
If $a_s < b_t$, then we modify two largest arcs \un{below} the horizontal line that correspond to the two 
parts $a_s$ in the middle of $(\un{a},\un{a}^{-1})$. That is, the arc from $n-a_s+1$ will go not to $n$, but to 
$n+1$; and the arc from $n+a_s$ goes now to $n$ in place of $n+1$. If $a_s> b_t$, then the same 
procedure applies to the both parts $b_t$ and two arcs \un{over} the horizontal line. 
If $a_s=b_t$, then either of the sides is suitable for alteration, because the two resulting graphs are isomorphic.
This alteration yields two arcs crossing each other, which explains the term "crossing".
A sample case is depicted below, where $a_s=4\le b_t$ and we do not draw the other arcs, over or below the horizontal line.
\begin{figure}[htb]
\setlength{\unitlength}{0.023in}
\begin{center}
\begin{picture}(200,23)(-5,-4)
\put(-12,3){$\dots$}
\put(75,3){$\dots$}
\put(28,6){\footnotesize $n$}
\put(36,6){\footnotesize $n{+}1$}
\multiput(0,3)(10,0){8}{\circle*{2}}
{\color{blue}
\put(15,1){\oval(10,7)[b]}
\put(15,1){\oval(30,19)[b]}
\put(55,1){\oval(10,7)[b]}
\put(55,1){\oval(30,19)[b]}
}
\qbezier[22](35,-11),(35,0),(35,12)
{\color{red}\put(87,1){$\mapsto$}
}
\put(25,18){\small $\Gamma_n(\check{\q})$}
\put(98,3){$\dots$}
\put(185,3){$\dots$}
\put(138,6){\footnotesize $n$}
\put(146,6){\footnotesize $n{+}1$}
\multiput(110,3)(10,0){8}{\circle*{2}}
{\color{blue}
\put(125,1){\oval(10,7)[b]}
\put(130,1){\oval(40,22)[b]}
\put(165,1){\oval(10,7)[b]}
\put(160,1){\oval(40,22)[b]}
}
\put(138,18){\small $\Gamma_n(\q)$}
\qbezier[22](145,-11),(145,0),(145,12)
\end{picture}
\end{center}
\end{figure}

\noindent
The graph obtained is the desired type-{\sf D} meander graph of a seaweed with crossing. Our 
construction justifies the notation $\q(S,T)=\q_n(\un{a}|\un{b})_{\sf c}$ for
seaweeds with crossing. In this case, we also write $\Gamma_n(S,T)=
\Gamma_n(\un{a}|\un{b})_{\sf c}$. 

{\it \bfseries  Remarks.}  {\sf (1)}\ The above alteration shouldn't be regarded as the permutation of vertices $n$ 
and $n+1$, because the arcs on the other side of the horizontal line are not affected!
\\ \indent
{\sf (2)}\ The rule is that two arcs crossing each other are related to the smaller part among 
$\{a_s,b_t\}$. We then say that crossing is on the {\it correct side\/} of the meander graph.  Otherwise, the 
crossing is on the {\it wrong side}. If $a_s=b_t$, then alteration can be made on any side, i.e.,
both sides are correct. Although, we initially
consider the graphs with crossing on the correct side, it can happen that after some reduction steps we 
obtain a graph with crossing on the wrong side. In that case, we 
will need further adjustments, see Section~\ref{sect:main}.
\\ \indent
{\sf (3)}\  For seaweeds without crossing, the sum of the compositions $\un{a}$ and $\un{b}$ is not fixed. Therefore, we always put the index $n$ in the notation for $\q_n(\un{a}|\un{b})\subset\sone$. While for the seaweeds with crossing, the sum is always $n$. Hence the notation $\q(\un{a}|\un{b})_{\sf c}$ is unambiguous.  

\vskip.7ex 
As a by-product of the definition,  we have the following observation:

\noindent   {\it If\/ $\q$ has a crossing, then there are exactly two arcs that cross each other in 
$\Gamma_n(\q)$. These two arcs are also the only arcs crossing the $\sigma$-mirror.
}

The connected components of $\Gamma_n(\q)$ through these two arcs are said to be {\it strange}.
It is easily seen that either 
these two arcs lie in the same connected component, which is a "strange" cycle, 
or they lie in two different ("strange") segments and $\sigma$ permutes these segments.

\begin{ex}   \label{ex:basic-crossing-q_ec}
The basic and most essential example of a seaweed with crossing is
$\q_{\sf ec}(n)=\q_{\sf ec}:=\q(\Pi\setminus\{\alpha_{n-1}\},\Pi\setminus\{\alpha_n\})\subset\sone$.

Here $\un{a}=\un{b}=(n)$ and the resulting meander graph for $n=5$ is depicted in 
Fig.~\ref{pikcha:D5-1}.

\begin{figure}[htb]
\setlength{\unitlength}{0.025in}
\begin{center}
\begin{picture}(140,25)(-5,-8)

\multiput(20,3)(10,0){10}{\circle*{2}}

{\color{blue}
\put(40,5){\oval(40,22)[t]}
\put(40,5){\oval(20,11)[t]}
\put(90,5){\oval(40,22)[t]}
\put(90,5){\oval(20,11)[t]}

\put(45,1){\oval(50,23)[b]}
\put(40,1){\oval(20,11)[b]}
\put(85,1){\oval(50,23)[b]}
\put(90,1){\oval(20,11)[b]}
}
\qbezier[37](65,-17),(65,0),(65,17)
\end{picture}
\end{center}
\caption{The meander graph of  $\q_{\sf ec}(5)\subset\gt{so}_{10}$}   \label{pikcha:D5-1}
\end{figure}
\noindent
The graph $\Gamma_n(\q_{\sf ec})$ has a unique strange connected component (cycle).
\end{ex}

\begin{rmk}     \label{rmk:3-kuska}
Given $\q=\q_n(\un{a}|\un{b})$ or $\q_n(\un{a}|\un{b})_{\sf c}$, suppose that 
$\sum_{i=1}^k a_i=\sum_{j=1}^l b_j=m$ for some $m\le n-2$ or $m=n$.  Then 
$\q\subset \el(\Pi\setminus \{\ap_m\})\simeq \gt{gl}_m\oplus \gt{so}_{2(n-m)}$ and $\Gamma_n(\q)$ 
consists of three disjoint graphs. The central graph represents a seaweed in $\gt{so}_{2(n-m)}$ and two 
extreme symmetric graphs represent seaweeds in $\gt{gl}_m$. More precisely, if 
$\un{a}'=(a_1,\dots,a_k)$, $\un{b}'=(b_1,\dots,b_l)$, $\un{a}''=(a_{k+1},\dots,a_s)$, and 
$\un{b}''=(b_{l+1},\dots,b_t)$, then the central graph is either $\Gamma_{n-m}(\un{a}''|\un{b}'')$ 
or $\Gamma_{n-m}(\un{a}''|\un{b}'')_{\sf c}$; and two other graphs are $\Gamma^{\sf A}(\un{a}'|\un{b}')$ and
$\Gamma^{\sf A}(\un{a}'^{-1}|\un{b}'^{-1})$, see Fig.~\ref{pikcha:tri-kuska}.

\begin{figure}[htb]
\setlength{\unitlength}{0.025in}
\begin{center}
\begin{picture}(150,30)(-10,-8)

\put(-1,3){$\Gamma^{\sf A}(\un{a}'|\un{b}')$}
\put(44,12){\small $\Gamma_{n-m}(\un{a}''|\un{b}'')$}
\put(57,4){\footnotesize or}
\put(43,-3){\small $\Gamma_{n-m}(\un{a}''|\un{b}'')_{\sf c}$}
\put(93,3){$\Gamma^{\sf A}(\un{a}'^{-1}|\un{b}'^{-1})$}

\thicklines
{\color{redi}
\put(10,5){\oval(40,30)}
\put(60,5){\oval(40,30)}
\put(110,5){\oval(40,30)}
}
\qbezier[25](60,-17),(60,4),(60,25)
\end{picture}
\end{center}
\caption{ }
\label{pikcha:tri-kuska}
\end{figure}
If $m=n$, then the central graph disappears, cf. Example~\ref{ex:ala-Kos}.
\end{rmk}

\section{The index of seaweeds via type-{\sf D} meander graphs}
\label{sect:main}

\noindent
By our constructions in Sections~\ref{subs:without} and \ref{subs:with}, each connected component of a 
type-{\sf D} meander graph is homeomorphic to either a cycle or segment. An isolated vertex is regarded 
as a segment. For instance, there are two segments and three cycles in Fig.~\ref{pikcha:D5-1}. The arcs 
crossing the $\sigma$-mirror are said to be {\it central}.  Our main result is the following formula for the 
index of a standard seaweed $\q$ in terms of the connected components of $\Gamma_n(\q)$. 

\begin{thm}    \label{thm:main}
Let $\q\subset\sone$ be a standard seaweed and\/ $\Gamma=\Gamma_n(\q)$  the corresponding 
type-{\sf D} meander graph.  Then
\beq  \label{eq:index-D}
    \ind\q=\#\{\text{the cycles of }\ \Gamma\} + 
   \frac{1}{2}\#\{\text{the non-$\sigma$-stable segments of\/ $\Gamma$}\} +\epsilon , 
\eeq
where $\epsilon=\epsilon(\q) 
\in \{0,\pm 1\}$ is determined by the following rules. 
\begin{itemize}
\item[$\langle\diamond_1\rangle$]
Suppose that $\q$ has no crossing, 
$\q=\q_n(\un{a}|\un{b})$, and $\Gamma_n(\q)=\Gamma_n(\un{a}|\un{b})$. 
Let $m_a$ and $m_b$ be the number of central arcs 
below and above the horizontal line, respectively. Assuming that $m_b\ge m_a$, we set
\begin{itemize}
\item $\epsilon=0$ if  $m_b-m_a$ is even; 
\item $\epsilon=1$ if $m_b$ is odd, $m_a=0$, and the arc between $n$ and $n+1$ belongs to a segment;   
\item $\epsilon=-1$ in the remaining cases (with $m_b-m_a$ odd). 
\end{itemize}
\item[$\langle\diamond_2\rangle$] If\/ $\q$ has a crossing, then there are two possibilities:
\begin{itemize}
\item if\/ $\Gamma_n(\q)$ has a unique \emph{strange} component (cycle), 
then $\epsilon=-1$; 
\item if there are two \emph{strange} segments (= the segments crossing the $\sigma$-mirror), then $\epsilon=0$. 
\end{itemize}
\end{itemize}
\end{thm}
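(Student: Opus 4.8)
The plan is to reduce the proof to the type-{\sf A} case via the inductive procedure of Dergachev--Kirillov and Panyushev, exactly as was done in types {\sf B} and {\sf C} in \cite{SW-C}, while keeping careful track of the correction term $\epsilon$. The guiding idea is that formula~\eqref{eq:index-A} already computes the index of the seaweed $\q^{\sf A}(\un{\tilde a}|\un{\tilde b})\subset\gltn$ whose type-{\sf A} meander graph equals $\Gamma=\Gamma_n(\q)$ (up to the crossing alteration). Since $\q\subset\sone$ is cut out of this $\gltn$-seaweed by the fixed-point condition of the involution defining $\sone$, one expects $\ind\q$ to be expressible through the $\sigma$-orbits of the connected components of $\Gamma$: a $\sigma$-stable cycle and a pair of $\sigma$-swapped cycles each contribute, a $\sigma$-stable segment contributes $0$ (it meets the antidiagonal in a way that does not add to the stabiliser), and a pair of swapped segments contributes $\tfrac12$ of the segment count. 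This would explain the main body of \eqref{eq:index-D}; the term $\epsilon$ then records the discrepancy coming from the branching node, i.e.\ from the central arcs straddling the $\sigma$-mirror.

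First I would establish the base cases of the induction. Using Remark~\ref{rmk:3-kuska}, whenever $\sum_{i\le k}a_i=\sum_{j\le l}b_j=m\le n-2$ the graph $\Gamma_n(\q)$ splits into a central $\gt{so}_{2(n-m)}$-graph and two $\sigma$-mirrored type-{\sf A} graphs; by Remark~\ref{rem:union=Pi} the corresponding seaweed decomposes compatibly, and since the two extreme pieces are genuine $\gln$-seaweeds whose index is governed by \eqref{eq:index-A}, the additivity of index over such direct-sum-in-a-Levi decompositions reduces everything to the case $S\cup T=\Pi$, where $\q$ lies in no proper Levi. I would therefore assume $S\cup T=\Pi$ from the outset. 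The genuinely new base cases are the irreducible ones near the branching node: the basic seaweed with crossing $\q_{\sf ec}(n)$ of Example~\ref{ex:basic-crossing-q_ec}, and the small seaweeds without crossing for which $m_a,m_b$ are minimal. For these I would compute $\ind\q$ directly from the algebraic formula of \cite[Conj.\,4.7]{ty-AIF}=\cite[\S8]{jos}, or by an explicit stabiliser computation, and check that \eqref{eq:index-D} returns the right value, thereby pinning down the signs in $\langle\diamond_1\rangle$ and $\langle\diamond_2\rangle$.

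The inductive step is where the real work lies. Following \cite[Dima01]{Dima01} and \cite{SW-C}, one removes a suitable simple root from $S$ or $T$ (equivalently, one performs an elementary reduction of the pair of compositions $(\un a,\un b)$), obtaining a smaller seaweed $\q'\subset\gt{so}_{2(n-1)}$ together with a controlled change in the index. On the graph side each such move either erases an arc that closes a cycle, or splices two segments, or shortens a segment, and one must verify that the right-hand side of \eqref{eq:index-D} changes by the same amount as $\ind\q$. The delicate bookkeeping is the behaviour of the central arcs under reduction: reducing the innermost blocks changes $m_a$ and $m_b$ by one at a time, toggling the parity of $m_b-m_a$ and hence the value of $\epsilon$ in $\langle\diamond_1\rangle$; and for a seaweed with crossing a reduction can push the crossing from the correct side to the wrong side, necessitating the adjustments alluded to in Remark~(2) after the crossing construction. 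I expect the main obstacle to be precisely the compatibility of $\epsilon$ with these reductions: one must show that the local change in the cycle/segment count plus the change in $\epsilon$ always equals the change in $\ind\q'$, and in particular that a reduction converting a strange cycle into two strange segments switches $\epsilon$ from $-1$ to $0$ while simultaneously creating one new non-$\sigma$-stable-segment pair, so that the net change matches. I would handle this by a finite case analysis organised according to which of the central parts $a_s,b_t$ is reduced and whether the crossing sits on the correct or wrong side, treating the wrong-side configurations by the explicit re-drawing rule that restores a correct-side graph with the same index, and only then reading off \eqref{eq:index-D}.
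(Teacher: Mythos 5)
Your overall strategy --- verify \eqref{eq:index-D} on a set of irreducible base cases and then show that both sides transform identically under the inductive procedure of \cite{Dima01} --- is the same as the paper's. But two of your concrete steps would fail as described. First, the base cases are misidentified. The reduction procedure terminates not at ``small seaweeds with $m_a,m_b$ minimal'' but at \emph{all} parabolic subalgebras (the case where one of the two compositions becomes empty) and at $\q_{\sf ec}(m)$; a parabolic $\p(S)=\q(S,\Pi)$ of arbitrary rank admits no further reduction, so the entire infinite family of parabolics must be checked against \eqref{eq:index-D} directly. This is where the real work with the Tauvel--Yu--Joseph formula sits: one needs $\#\cK(S)$ for $\el(S)=\gt{gl}_{a_1}\oplus\dots\oplus\gt{gl}_{a_s}\oplus\gt{so}_{2d}$ and a case analysis on the parities of $n$ and $d=n-|\un{a}|$ (and on whether $a_s=1$ when $d=0$) to pin down $\epsilon$. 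Your plan allots this computation only to finitely many small configurations, so it does not cover the induction base.

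Second, your picture of the inductive step is wrong in a way that matters. The reductions of Step 2(ii) act on the \emph{outer} parts $a_1,b_1$ and, on the graph, amount to contracting non-central arcs at the two ends of $\Gamma_n(\q)$; they preserve the differences $n-|\un{a}|$ and $n-|\un{b}|$, hence the central arcs, hence $m_a$, $m_b$, and $\epsilon$, and they do not change the topological type of the graph. There is no ``toggling of the parity of $m_b-m_a$'', and a strange cycle never converts into two strange segments: the number of strange components is an invariant of the reduction (the only transition that occurs is that a crossing carried by two strange \emph{segments} can vanish, with $\epsilon=0$ before and after). Indeed your own bookkeeping for the purported cycle-to-segments transition does not balance: losing one cycle ($-1$), gaining two non-$\sigma$-stable segments ($+1$), and moving $\epsilon$ from $-1$ to $0$ ($+1$) changes the right-hand side of \eqref{eq:index-D} by $+1$, while $\ind\q$ is unchanged. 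The genuine subtlety you need to handle --- and gesture at only vaguely --- is that a contraction can move the crossing to the wrong side of the horizontal line; this is repaired by permuting the two central vertices (a change of basis in the standard representation of the smaller orthogonal algebra), which alters neither the cycle/segment count nor $\epsilon$.
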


\begin{ex}   \label{ex:two-cases}
1) \ The first possibility in $\langle\diamond_2\rangle$ realises for $\q_{\sf ec}(n)$, 
see~Fig.~\ref{pikcha:D5-1} for $n=5$. Hence $\ind \q_{\sf ec}(5)=3$. The second possibility occurs for 
$\q=\q(S,T)\subset \mathfrak{so}_{10}$ with $S=\{\ap_1,\ap_3,\ap_5\}$ and
$T=\{\ap_2,\ap_3,\ap_4\}$. Then $\un{a}=(2,3)$, $\un{b}=(1,4)$, and $\Gamma_5(\q)=
\Gamma_5(2,3|1,4)_{\sf c}$, see~Fig.~\ref{pikcha:D5-2}. 
Here $\Gamma_5(\q)$ has two 
strange segments. Therefore, $\epsilon=0$ and $\ind \q=1$.
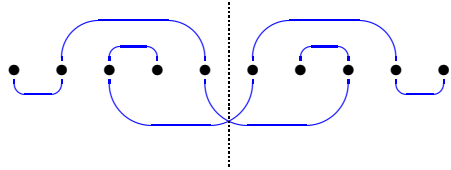
\begin{figure}[htb]
\setlength{\unitlength}{0.025in}
\begin{center}
\begin{picture}(140,25)(-5,-8)

\multiput(20,3)(10,0){10}{\circle*{2}}

{\color{blue}
\put(45,5){\oval(10,6)[t]}
\put(45,5){\oval(30,17)[t]}
\put(85,5){\oval(10,6)[t]}
\put(85,5){\oval(30,17)[t]}

\put(55,1){\oval(30,19)[b]}
\put(25,1){\oval(10,6)[b]}
\put(75,1){\oval(30,19)[b]}
\put(105,1){\oval(10,6)[b]}
}
\qbezier[37](65,-17),(65,0),(65,17)
\end{picture}
\end{center}
\caption{The meander graph $\Gamma_5(2,3|1,4)_{\sf c}$}   \label{pikcha:D5-2}
\end{figure}

2) \ For $\q=\q_5(2,2,1|3,2)\subset\gt{so}_{10}$ (Fig.~\ref{pikcha:D5-kos}), we have $\epsilon=0$
and $\ind\q=\ind\q^{\sf A}(2,2,1|3,2)=1$.

3) \ We have $\epsilon=1$ for $\be\subset \gt{so}_{10}$ (Fig.~\ref{pikcha:Borel}). Hence $\ind\be=\epsilon=1$. 
\end{ex}
\noindent
In our proof of Theorem~\ref{thm:main}, we use  
the inductive procedure of~\cite{Dima01}. That procedure allows us to reduce computation of the index 
of arbitrary seaweeds to the case of either a 
parabolic subalgebra in $\gt{so}_{2m}$ or the seaweed with crossing $\q_{\sf ec}(m)$ for some $m\le n$ 
(see below). For this reason, we begin with the case of parabolics and seaweeds $\q_{\sf ec}$. 
In dealing with the parabolics, the general {\it Tauvel--Yu--Joseph formula} (=\,{\it TYJ formula}) for the 
index of a seaweed $\q(S,T)\subset \g$ is required. Let $\cK(\el(S))=:\cK(S)$ be the cascade of strongly 
orthogonal roots (=\,{\it Kostant's cascade}) in the Levi subalgebra $\el(S)$, see \cite{jos77,ty-AIF} for the 
details. In particular, $\cK(\Pi)=\cK(\g)$ is the cascade in the whole of $\g$.
Let $E_S$ be the linear span of $\cK(S)$ in
$\te^*_{\BR}$. Then $\dim E(S)=\# \cK(S)$ and the TYJ formula reads: 
\beq    \label{eq:tau-yu}
   \ind\q(S,T)=\rk \g +\dim E_S +\dim E_T  -2\dim (E_S+E_T) ,
\eeq
see~\cite[Conj.\,4.7]{ty-AIF} and \cite[Section\,8]{jos}. Clearly, $\cK(\el_1\oplus\el_2)=\cK(\el_1)\sqcup\cK(\el_2)$. For future use, we record the data
on the cascade in $\gln$ and $\sone$. For $\g=\gln$, we have
\beq  \label{eq:K-Pi-gl}
       \cK(\Pi_{\gln})=\{\esi_i-\esi_{n+1-i}\mid i=1,\dots, [n/2]\} \text{ and } \ \# \cK(\Pi_{\gln})=[n/2].
\eeq
For $\g=\sone$, we have
$\cK(\Pi_{\sone})=\{\esi_1\pm\esi_2,\dots,\esi_{2l-1}\pm\esi_{2l}\}$ if $n=2l,2l+1$. Therefore, 
\beq   \label{eq:K-Pi}
   \# \cK(\Pi_{\sone})=\begin{cases} \rk\g=n, & \text{ if $n$ is even}, \\
      \rk\g-1=n-1,  &\text{ if $n$ is odd}.  \end{cases} 
\eeq

\begin{lm}        \label{Ind-base}
Formula~\eqref{eq:index-D} holds for all parabolic subalgebras and the seaweed $\q_{\sf ec}$ in $\sone$.
\end{lm}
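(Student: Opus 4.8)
The plan is to prove Formula~\eqref{eq:index-D} for the two base cases---parabolic subalgebras and the seaweeds $\q_{\sf ec}(m)$---by directly comparing the right-hand side of \eqref{eq:index-D} with the Tauvel--Yu--Joseph formula \eqref{eq:tau-yu}, using the explicit cascade data in \eqref{eq:K-Pi-gl} and \eqref{eq:K-Pi}. For a parabolic $\p(S)$ (say $T=\Pi$, so $\p^-(T)=\g$), we have $E_T=\te^*_{\BR}$ and hence $E_S+E_T=E_T$; the TYJ formula collapses to $\ind\p(S)=\rk\g+\dim E_S+\dim E_T-2\dim E_T=\rk\g-\dim E_S=\#\cK(\Pi)-\#\cK(S)$ when $n$ is even, with the appropriate $\pm1$ correction from \eqref{eq:K-Pi} when $n$ is odd. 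So the first task is to interpret $\#\cK(\Pi)-\#\cK(S)$ combinatorially in terms of the meander graph $\Gamma_n(\p(S))$.

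The key computational step is to read off the connected components of $\Gamma_n(\p(S))$ from the composition $\un a$ associated to $S$. Since $\p(S)=\p(\un a)$ has $\un b=\varnothing$, i.e. $\un{\tilde b}=(2n)$, all $n$ arcs above the horizontal line are the nested arcs of a single part $2n$; these join vertex $i$ to vertex $2n+1-i$ for each $i$. The arcs below the line come from $\un{\tilde a}=(\un a,2d,\un a^{-1})$ with $d=n-|\un a|$. I would compute $\#\cK(S)=\sum_j[a_j/2]+[d]$ (using that $\cK(\gt{gl}_{a_j})$ contributes $[a_j/2]$ via \eqref{eq:K-Pi-gl} and $\cK(\gt{so}_{2d})$ contributes $d$ or $d-1$ via \eqref{eq:K-Pi}), and then match this count against the cycle-and-segment census of $\Gamma_n(\p(S))$. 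The nesting symmetry forces each part $a_j$ on the left to pair with its mirror part on the right through the long top arcs, producing cycles; a careful parity bookkeeping of when the central block $2d$ is even versus odd recovers exactly the $\epsilon$ from rule $\langle\diamond_1\rangle$ with $m_a=0$ (a parabolic has no central arcs below). This is where rule $\langle\diamond_1\rangle$'s three sub-cases---$\epsilon=0$, $\epsilon=1$ (the isolated middle vertex $n$ lying on a top segment), and $\epsilon=-1$---get verified against the parity of $n$ in \eqref{eq:K-Pi}.

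For $\q_{\sf ec}(m)$ I would use $\un a=\un b=(m)$, compute $\cK(S)$ and $\cK(T)$ directly, and evaluate $\dim(E_S+E_T)$---the one genuinely nontrivial term, since here $S\neq T$ and the two cascades interact. By the description in Example~\ref{ex:basic-crossing-q_ec}, $\Gamma_m(\q_{\sf ec})$ is a single strange cycle together with the remaining non-strange components; I would verify that the count $\#\{\text{cycles}\}+\tfrac12\#\{\text{non-}\sigma\text{-stable segments}\}$ equals $\rk\g+\dim E_S+\dim E_T-2\dim(E_S+E_T)+1$, so that the correction $\epsilon=-1$ from rule $\langle\diamond_2\rangle$ is precisely what reconciles the graph count with TYJ. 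The main obstacle I anticipate is the dimension $\dim(E_S+E_T)$ for $\q_{\sf ec}$: the root $\ap_n=\esi_{n-1}+\esi_n$ versus $\ap_{n-1}=\esi_{n-1}-\esi_n$ makes the two Levi cascades overlap in a way that is slightly subtler than a disjoint union, and getting the off-by-one exactly right---distinguishing the even and odd $n$ behaviour, and confirming that the $+1$ needed in TYJ is always realized as the single strange cycle contributing $\epsilon=-1$---is the delicate part that the whole inductive scheme in Section~\ref{sect:main} will rest upon.
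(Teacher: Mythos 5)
Your overall strategy for the parabolic case---compare the right-hand side of \eqref{eq:index-D} with the TYJ formula \eqref{eq:tau-yu} using the cascade data---is exactly the paper's, but your execution contains three concrete errors that would derail the verification. First, the algebra: $\rk\g+\dim E_S+\dim E_T-2\dim E_T=\rk\g+\dim E_S-\dim E_T$, which for $n$ even (where $E_\Pi=\te^*_{\BR}$) gives $\ind\p(S)=\dim E_S=\#\cK(S)$, \emph{not} $\rk\g-\dim E_S=\#\cK(\Pi)-\#\cK(S)$; you have the sign of $\dim E_S$ backwards, so the quantity you propose to match against the graph census is the wrong one. Second, your reduction $E_S+E_T=E_T$ is only automatic for $n$ even. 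For $n$ odd, $E_\Pi=(\esi_n)^\perp$ is a proper subspace and one must decide whether $\dim(E_S+E_\Pi)$ is $n$ or $n-1$; this depends on whether $\cK(S)$ contains a root of the form $\esi_j\pm\esi_n$, i.e.\ on whether $d=n-|\un{a}|\ge 2$ is even or $d=0$ with $a_s>1$ (condition \eqref{eq:condition-n} in the paper). This dichotomy, not merely the parity bookkeeping in \eqref{eq:K-Pi}, is what produces the three values of $\epsilon$ for $n$ odd, and "the appropriate $\pm1$ correction" does not capture it. Third, your claim that $m_a=0$ for a parabolic is false whenever $d>0$: the middle part $2d$ of $\un{\tilde a}=(\un{a},2d,\un{a}^{-1})$ contributes $d$ nested \emph{central} arcs below the horizontal line, so $m_a=d$. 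This matters concretely: for $n$ even and $d$ odd the correct sub-case of $\langle\diamond_1\rangle$ is $\epsilon=-1$ (with $m_b$ even and $m_a=d>0$), whereas $m_a=0$ would force $\epsilon=0$ and give the wrong index.

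For $\q_{\sf ec}$ your route genuinely differs from the paper, which identifies $\q_{\sf ec}$ with a semi-direct product $(\gt{gl}_{n-1}\oplus\gt{gl}_1)\ltimes(\BC^{n-1}\oplus(\BC^{n-1})^*)$ and applies the Ra\"is formula to get $\ind\q_{\sf ec}=n-2$, then checks the graph count separately. Your plan---apply TYJ with $S=\Pi\setminus\{\ap_{n-1}\}$, $T=\Pi\setminus\{\ap_n\}$---is viable and arguably cleaner: the two cascades differ only in the element $\esi_1\pm\esi_n$, so $\dim(E_S+E_T)=[n/2]+1$ and TYJ gives $n+2[n/2]-2([n/2]+1)=n-2$, matching the $n-1$ "cycle plus half-segments" count of $\Gamma_n(\q_{\sf ec})$ with $\epsilon=-1$. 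You correctly flag $\dim(E_S+E_T)$ as the one delicate term, though you do not actually compute it. In short: the $\q_{\sf ec}$ half is a sound alternative outline, but the parabolic half as written would not verify \eqref{eq:index-D} and needs the three corrections above.
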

\begin{proof}
1)  Using the explicit matrix model of $\q_{\sf ec}$, we notice that it is isomorphic to the semi-direct product $(\gt{gl}_{n-1}{\oplus}\gt{gl}_{1})\ltimes(\BC^{n-1}{\oplus}(\BC^{n-1})^*)$, where $\BC^{n-1}$ and $(\BC^{n-1})^*$ are standard dual $\gt{gl}_{n-1}$-modules and 
the weights of the $2$-dim centre of $\gt{gl}_{n-1}{\oplus}\gt{gl}_{1}$ on $\BC^{n-1}$ and $(\BC^{n-1})^*$ are linearly independent, see the picture.
\begin{center}
\begin{tikzpicture}[scale= .5]
\draw (0,0)  rectangle (10,10);

\path[draw,line width=1pt]  (0,5) -- (5,5) -- (5,6) -- (4,6) -- (4,10) -- (0,10) -- (0,5) ;

\shade[bottom color=yellow,top color=gray,draw, line width=1pt]  (0,6) -- (4,6) -- (4,10) -- (0,10) -- (0,6) ;

\path[draw,line width=1pt]  (5,0) -- (5,5) -- (6,5) -- (6,4) -- (10,4) -- (10,0) -- (5,0); 

\shade[left color=yellow,right color=gray, draw,line width=1pt]  (6,0) -- (6,4) -- (10,4) -- (10,0) -- (6,0); 

\shade[bottom color=yellow,top color=brown,draw,line width=1pt]  (0,5) -- (4,5) -- (4,6) -- (0,6) -- (0,5) ;

\shade[left color=yellow,right color=brown,draw,line width=1pt]  (5,0) -- (5,4) -- (6,4) -- (6,0) -- (5,0) ;

\shade[left color=yellow,right color=brown,draw,line width=1pt]  (5,6) -- (6,6) -- (6,10) -- (5,10) -- (5,6) ;

\shade[top color=brown,bottom color=yellow,draw,line width=1pt]  (6,5) -- (6,6) -- (10,6) -- (10,5) -- (6,5) ;

\shade[bottom color=yellow,top color=gray,draw, line width=1pt] (4,5) -- (5,5) -- (5,6) -- (4,6) -- (4,5) ;

\shade[left color=yellow,right color=gray,draw, line width=1pt] (5,4) -- (5,5) -- (6,5) -- (6,4) -- (5,4) ;
\draw[->]   (-1, 5) .. controls (3,2.5) .. (4.5,5.5);

\draw[->]   (-1.1,1.5) .. controls (-0.5,2) and (2.5,1) .. (2,5.5);
\draw[->]   (10.5,8) .. controls (8,8)  .. (5.5,8);

\draw (2.5,7.5)  node {\small $\mathfrak{gl}_{n-1}$} ;
\draw (-1,5.5)  node {\small $\mathfrak{gl}_{1}$} ;
\draw (-1.4,.9)  node {\small $(\mathbb C^{n-1})^*$} ;
\draw (11.7,8)  node {\small $\mathbb C^{n-1}$} ;

\draw[dashed,magenta]  (0,0) -- (10,10) ;
\draw[dashed,my_color]  (10,0) -- (0,10) ;
\end{tikzpicture}
\end{center}

Applying the Ra\"{i}s formula for the index of semi-direct products~\cite{rais}, we then obtain
$\ind\q_{\sf ec}=n-2$. 
On the other hand, 
\\ \indent
-- if $n$ is even, then $\Gamma_n(\q_{\sf ec})$ consists of $n-1$ cycles;
\\ \indent
-- if $n$ is odd, then $\Gamma_n(\q_{\sf ec})$ consists of $n-2$ cycles and two isolated points
(segments), which are not $\sigma$-stable.
\\
According to $\langle\diamond_2\rangle$,  here $\epsilon=-1$, which yields the value $n-2$ in
Eq.~\eqref{eq:index-D} in both cases.  

Thus, Ra\"{i}s' formula and \eqref{eq:index-D} give one and the same value for $\ind\q_{\sf ec}(n)$. 

2)  Let $\q$ be a standard parabolic, that is, $\q=\p(S)=\q(S,\Pi)$. W.l.o.g., we may assume that $S$ is 
admissible and then take the associated compositions $\boa:=(a_1,\dots,a_s)$ and $\un{b}=\varnothing$. Set $\Gamma=\Gamma_n(\un{a}|\varnothing)$ and 
$k(\un{a}):=\left[\frac{a_1}{2} \right ]+\ldots+\left[ \frac{a_s}{2} \right ]+d$ , where $d=n-|\un{a}|$. 
It is  easily seen that
\\[.6ex] \centerline{
$k(\un{a})=\#\{\text{the cycles of }\ \Gamma \} + 
\frac{1}{2}{\cdot}\#\{\text{the non-$\sigma$-stable segments of $\Gamma$}\}$ .
}\\[.6ex]
Therefore, Theorem~\ref{thm:main} gives the value $k(\boa)+\epsilon$ for $\ind\p(S)$. (Actually, all
segments of $\Gamma$ are $\sigma$-stable, so $k(\boa)$ equals just the number 
of cycles.) 
To apply the TYJ formula~\eqref{eq:tau-yu}, one has to distinguish even and odd $n$. Since 
$\el(S)= \gt{gl}_{a_1}\oplus\dots \oplus\gt{gl}_{a_s}\oplus \gt{so}_{2d}$, Formulae~\eqref{eq:K-Pi-gl} and
\eqref{eq:K-Pi} show that the value of $\# \cK(S)$ depends on the parity of $d$ as well. Namely,
\beq    \label{eq:K(S)}
    \# \cK(S) =\begin{cases} 
    \left[ \frac{a_1}{2} \right ]+\ldots+\left[ \frac{a_s}{2} \right ]+d=k(\boa), & \text{ if $d$ is even}; \\
    \left[ \frac{a_1}{2} \right ]+\ldots+\left[ \frac{a_s}{2} \right ]+d-1=k(\boa)-1, & \text{ if $d$ is odd}.
    \end{cases} 
\eeq

{\it \bfseries Suppose that $n$ is even}. Here 
Formulae~\eqref{eq:tau-yu} and \eqref{eq:K-Pi} give us that $\ind\gt p(S)=\# \cK(S)$.  

\textbullet \ \ If $d$ is even, then $\# \cK(S)=k(\boa)$. 
On the other hand, $m_b=n$, $m_a=d$, and  $n-d$ is even; hence $\epsilon=0$.

\textbullet \ \ If $d$ is odd, then $\# {\mathcal K}(S)=\koa-1$. On the other hand, $m_b$ is even, 
$m_a>0$, and $n-d$ is odd; hence $\epsilon=-1$.
\\ \indent
Thus, both Theorem~\ref{thm:main} and Eq.~\eqref{eq:tau-yu} give the same value for $\ind\p(S)$.

\vskip.7ex
{\it \bfseries Suppose that $n$ is odd}. We first mention the case of $n=1$ and $\g=\q=\gt{so}_2$,
since it may occur as a step in our future reduction procedure.
Then $\ind\q=1$ and by Definition~\ref{def:graf-bez}, the  
meander graph of $\q$ is 
$\Gamma_1(1|1)$. 
For this graph, Theorem~\ref{thm:main} also gives value $1$.

Until the end of the proof, we assume that $n\ge 3$ is odd. 
Then $E_{\Pi}$ is a subspace of $\te^*_\BR$ of codimension $1$. More precisely,
$E_{\Pi}=(\esi_n)^\perp$. 
Therefore, $\dim (E_S + E_\Pi)=n$ if and only if $\cK(S)$ contains  a root of the form $\esi_j\pm\esi_n$ for 
some $j< n$. This is determined by the ``last'' factor of $\el(S)$, which us either $\gt{gl}_{a_s}$ (if $d=0$) 
or $\gt{so}_{2d}$. Then an easy analysis shows that
\beq    \label{eq:condition-n}
   \dim (E_S + E_\Pi)=n \ \Longleftrightarrow \ 
   d\ge 2 \ \text{ is even or }  \  
   d=0 \ \ \&  \ \ a_s>1. 
\eeq
There are three possibilities now, and each time we compare the values given by Eq.~\eqref{eq:index-D} and the TYJ formula.

\textbullet \ \ If $ \dim (E_S + E_\Pi)=n$, 
then the TYJ formula gives
\\[.6ex]
\centerline{ $\ind\p(S)=n +\#\cK(S)+ (n-1)-2n=\#\cK(S)-1=k(\boa)-1$.
}
On the other hand, $m_b=n$ is odd and $m_a=d$ is even. It is also easily seen that in both cases
($d\ge 2$ or $d=0$ and $a_s>1$), the arc between vertices $n$ and $n+1$ belongs to a cycle.
Therefore, $\epsilon=-1$.

\textbullet \ \ If  $d=0$ and $a_s=1$, then $E_S+E_\Pi=E_\Pi$ and the TYJ formula gives
\\[.6ex]
\centerline{
$\ind\p(S)=n +\#\cK(S)+ (n-1)-2(n-1)=\#\cK(S)+1=k(\boa)+1$.
}
On the other hand, $m_b=n$ is odd and $m_a=0$. The condition that $a_s=1$ also implies that 
the arc between vertices $n$ and $n+1$ belongs to a segment.
Therefore, $\epsilon=1$.

\textbullet \ \ If  $d$ is odd, then still $E_S+E_\Pi=E_\Pi$ and, taking into account Eq.~\eqref{eq:K(S)},
 the TYJ formula gives
\\[.6ex]
\centerline{
$\ind\p(S)=n +\#\cK(S)+ (n-1)-2(n-1)=\#\cK(S)+1=k(\boa)$.
}
On the other hand, both $m_b=n$ and $m_a=d$ are odd. Hence $\epsilon=0$.

Thus, it is verified in all cases that $\ind\p(S)=k(\boa)+\epsilon$. 
\end{proof} 

\begin{rmk}
Explicit formulae for the index of the parabolic subalgebras of $\sone$ are obtained in~\cite[Section\,4]{dvor}. They could have been used in place of
the TYJ formula in the proof of Lemma~\ref{Ind-base}.
\end{rmk}

Let us recall the {\it inductive procedure\/} for computing the index of seaweeds in the classical  Lie 
algebras introduced by the first author~\cite{Dima01}. The aim of that procedure is to reduce computation 
of the index of arbitrary seaweeds to parabolic subalgebras. It is a good time to confess that there is 
a gap concerning the case of $\sone$ in~\cite[Sect.\,5]{Dima01}. 
Not any seaweed in $\sone$ can be reduced to a parabolic. 
Strictly speaking, because seaweeds {\bf with} crossing are not considered in~\cite{Dima01}, the applicability
of the inductive procedure to them is questionable. However, as we shortly see, the procedure can be adjusted so that it works unless $\q$ is parabolic or $\q=\q_{\sf ec}(m)$.
That is, the correct statement is that any standard seaweed in $\sone$ can be reduced to either a 
parabolic or $\q_{\sf ec}(m)\subset\gt{so}_{2m}$ for some $m\le n$.  

Suppose that $\un{a}=(a_1,\dots,a_s)$ and 
$\un{b}=(b_1,\dots,b_t)$ are two compositions with $|\un{a}| \le n$, $|\un{b}| \le n$, $|\un{a}|\ne n-1$,
and $|\un{b}|\ne n-1$. Consider the standard seaweed without crossing 
$\q_n(\un{a}{\mid} \un{b}) \subset \gt{so}_{2n}$.

{\sf \bfseries Inductive procedure}:

{\sl \bfseries Step 1.} If either $\un{a}$ or $\un{b}$ is empty, then $\q_n(\un{a}{\mid} \un{b})$ is a
parabolic, and there is no reduction. 
 
{\sl\bfseries Step 2.}  Suppose that both $\un{a}$ and $\un{b}$ are non-empty.
By \cite[Theorem\,5.2]{Dima01}, $\ind \q_n(\un{a}{\mid} \un{b})$ can 
recursively be computed as follows:
\par
{\sf (i)} \ If $a_1=b_1$, then $\q_n(\un{a}{\mid} \un{b})\simeq \gt{gl}_{a_1}\oplus \q_{n-a_1}(a_2,\dots,a_s{\mid} b_2,\dots,b_t)=:\gt{gl}_{a_1}\oplus \q'$, hence
\par
\centerline{$
  \ind \q_n(\un{a}{\mid} \un{b})=a_1+\ind \q_{n-a_1}(a_2,\dots,a_s{\mid} b_2,\dots,b_t)=a_1+\ind\q'$.}
\par
{\sf (ii)} \ If $a_1\ne b_1$, then $\ind\q_n(\un{a}{\mid}\un{b})=\ind\q'$, where $\q'$ is defined as follows. If $a_1< b_1$, then 
\beq    \label{q-shtrikh}
\q'=   
\left\{\begin{array}{ll} 
\q_{n-a_1}(a_2,\dots,a_s{\mid} b_1-2a_1,a_1,b_2,\dots,b_t) 
& \ {\mathrm{for\ }} a_1\le b_1/2 ; \\
\q_{n-b_1+a_1}(2a_1-b_1,a_2,\dots,a_s{\mid} a_1,b_2,\dots,b_t) 
& \ {\mathrm{for\ }} a_1 \ge b_1/2  ,
\end{array}\right.
\eeq
and likewise, if $a_1>b_1$.
\par{\sf (iii)} \ This step terminates if one of the compositions becomes empty, i.e., 
we obtain a parabolic subalgebra 
in a smaller orthogonal Lie algebra $\gt{so}_{2m}$.
\\[.7ex]
This procedure works also for types {\sf A,B,C}. In particular, if $\q^{\sf A}(\un{a}|\un{b})\subset\gln$
(and hence $|\un{a}|=|\un{b}|=n$), then the similar steps and formulae apply, see \cite[Theorem\,4.2]{Dima01}.

\begin{rmk}   \label{rem: preserve-difference}
The formulae of {\sl\bfseries Step~2} preserve the differences $n-|\un{a}|$ and $n-|\un{b}|$. 
For instance, if $a_1 >b_1/2$, then $n-|\un{a}|=(n-b_1+a_1)- \bigl((2a_1-b_1)+
\sum_{j=2}^s a_j\bigr)$. This means that the forbidden (excluded) compositions cannot occur after a reduction step, i.e., the {\sf \bfseries inductive procedure} is well-defined. (Recall that we exclude the compositions $\un{a}$ such that $n-|\un{a}|=1$.)
\end{rmk}

Let us explain how this procedure works if $\q=\q(S,T)$ has a crossing and, say, $S$ is not admissible.
By Section~\ref{subs:with}, we associate two compositions of $n$ with $\q$, 
$\un{a}=\un{a}(\check S)=(a_1,\dots,a_s)$ and $\un{b}=\un{b}(T)=(b_1,\dots,b_t)$, such that 
$\q=\q(\un{a}|\un{b})_{\sf c}$. We may as well assume that $a_s\le b_t$. The presence of crossing is 
expressed via the modification of the largest arcs associated with part $a_s$ in 
$\Gamma_n(\un{a}|\un{b})=\Gamma^{\sf A}(\un{a},\un{a}^{-1}|\un{b},\un{b}^{-1})$. In the situation of 
{\sl\bfseries Step 2}{(\sf i}), where $a_1=b_1$, we have $\q'=\q(a_2,\ldots,a_s | b_2,\ldots,b_t)_{\sf c}$.  
The formulae of {\sl\bfseries Step~2}{(\sf ii}) reflect certain invariant-theoretic manipulations with $\q$ 
that affect only the upper-left block $\gt{gl}_m\subset \q$, where $m=\max\{a_1,b_1\}$, see~\cite{Dima01}. 
Actually, $\q'$ is the stabiliser of a suitable $\xi\in\q^*$.  
The description of $\q'$ is independent of 
the parts $a_2,\dots,a_s; b_2,\dots, b_t$. Therefore, 
as long as  the part $a_s$ of a seaweed $\gt q$ with crossing 
is not involved in the reduction, we can pass to $\q'$ with $\ind\q'=\ind\q$. 
Mostly $\q'$ would be a seaweed with crossing defined by Eq.~\eqref{q-shtrikh} with the subscript 
`{\sf c}' in the RHS. But there are some exceptional cases, and this is to be clarified in the proof of Theorem~\ref{thm:main}. 

\begin{rmk}   \label{rmk:auch-for-graphs}
The procedure can be thought of as one that applies to the triples $(n;\un{a} | \un{b})$, where
$|\un{a}| \le n$ and $|\un{b}| \le n$, and thereby to the corresponding type-{\sf D} seaweeds and 
meander graphs. For instance, the first equality in~\eqref{q-shtrikh} means that we replace 
$(n;\un{a} | \un{b})$ with $(n-a_1;a_2,\dots,a_s | b_1-2a_1,a_1,b_2,\dots,b_t)$, if $a_1\le b_1/2$.
Accordingly, $\Gamma_n(\un{a}|\un{b})$ is replaced with 
$\Gamma_{n-a_1}(a_2,\dots,a_s{\mid} b_1-2a_1,a_1,b_2,\dots,b_t)$. An important feature is that 
{\sl\bfseries Step~2}{\sf (ii)} may (and will) be understood in the graph setting as the contraction of certain arcs in $\Gamma_n(\un{a}|\un{b})$ related to the parts $a_1,b_1$, see~\cite[Lemma\,5.4(i)]{MY12}. 
Since $\Gamma_n(\un{a}|\un{b})$ is symmetric w.r.t. the $\sigma$-mirror, these contractions are performed 
simultaneously on the both ends of it. 
The pictures below demonstrate the effect of contractions in the left hand end of the meander graph
$\Gamma_n(\un{a}|\un{b})$.
\vskip1ex
\centerline{\sl The case in which \ $a_1\le b_1/2$ :}

\begin{center}
\begin{tikzpicture}[scale= .6] 
\path  (0,0) -- (25,0);
\foreach \x in {0,...,13}  \shade[shading=ball, ball color=green](\x,0) circle (2mm); 
\draw[line width=1.5pt,color=gray] (0, -0.2)   arc(180:360:2cm);
\draw[line width=1.5pt,color=gray] (1, -0.2)   arc(180:360:1cm);
\draw[line width=1pt,color=orange] (13, 0.2)   arc(0:180:6.5cm);
\draw[line width=1pt,color=orange] (12, 0.2)   arc(0:180:5.5cm);
\draw[line width=1pt,color=orange] (11, 0.2)   arc(0:180:4.5cm);
\draw[line width=1pt,color=orange] (10, 0.2)   arc(0:180:3.5cm);
\draw[line width=1pt,color=orange] (9, 0.2)   arc(0:180:2.5cm);
\draw[line width=1.5pt,color=gray] (8, 0.2)   arc(0:180:1.5cm);
\draw[line width=1.5pt,color=gray] (7, 0.2)   arc(0:180:.5cm);
\draw (9,-1)  node {$\mathbf{\dots\dots \quad \dots \dots \quad \dots \dots \dots}$};
\draw (-0.2, -.5) -- (-0.2,-3.3);
\draw (4.2, -.5) -- (4.2,-3.3);
\draw[<->] (-.1,-2.8) -- (4.1,-2.8) node[pos=.5,below] {\small $a_1$};
\draw (4.8, -0.5) -- (4.8,-2.5);
\draw (8.2, -0.5) -- (8.2,-2.5);
\draw[<->] (4.9,-2) -- (8.1,-2) node[pos=.5,below] {\small $b_1-2a_1$};
\draw (-0.2, .5) -- (-0.2,7.7);
\draw (13.2, .5) -- (13.2,7.7);
\draw[<->] (-.1,7.2) -- (13.1,7.2) node[pos=.5,above] {\small $b_1$};

\foreach \x in {16,...,24}  \shade[shading=ball, ball color=green](\x,0) circle (2mm); 
\draw[line width=1.5pt,color=gray] (19, 0.2)   arc(0:180:1.5cm);
\draw[line width=1.5pt,color=gray] (18, 0.2)   arc(0:180:.5cm);
\draw[line width=1.5pt,color=gray] (24, 0.2)   arc(0:180:2cm);
\draw[line width=1.5pt,color=gray] (23, 0.2)   arc(0:180:1cm);
\draw (20,-1)  node {$\mathbf{\dots\quad \dots \quad \dots \quad \dots \quad \dots}$};
\draw (15.8, .5) -- (15.8,3.5);
\draw (19.2, .5) -- (19.2,3.5);
\draw[<->] (15.9,3) -- (19.1,3) node[pos=.5,above] {\small $b_1-2a_1$};
\draw (19.8, .5) -- (19.8,3.5);
\draw (24.2, .5) -- (24.2,3.5);
\draw[<->] (19.9,3) -- (24.1,3) node[pos=.5,above] {\small $a_1$};

\draw (14.5,0)  node[color=red]  {\small $\mapsto$};
\end{tikzpicture}
\end{center}

\vskip2ex
\centerline{\sl The case in which \ $b_1/2\le a_1 < b_1$ :}

\begin{center}
\begin{tikzpicture}[scale= .6] 
\path  (0,0) -- (18,0);
\foreach \x in {0,...,8}  \shade[shading=ball, ball color=green](\x,0) circle (2mm); 
\draw[line width=1.5pt,color=gray] (0, -0.2)   arc(180:360:2.5cm);
\draw[line width=1.5pt,color=gray] (1, -0.2)   arc(180:360:1.5cm);
\draw[line width=1.5pt,color=gray] (2, -0.2)   arc(180:360:.5cm);
\draw[line width=1pt,color=orange] (8, 0.2)   arc(0:180:4cm);
\draw[line width=1pt,color=orange] (7, 0.2)   arc(0:180:3cm);
\draw[line width=1pt,color=orange] (6, 0.2)   arc(0:180:2cm);
\draw[line width=1.5pt,color=gray] (5, 0.2)   arc(0:180:1cm);
\draw (7,-1)  node {$\mathbf{\dots\quad \dots}$};
\draw (-0.2, -.5) -- (-0.2,-3.5);
\draw (5.2, -.5) -- (5.2,-3.5);
\draw[<->] (-.1,-3.2) -- (5.1,-3.2) node[pos=.5,below] {\small $a_1$};
\draw (-0.2, .5) -- (-0.2,5);
\draw (8.2, .5) -- (8.2,5);
\draw[<->] (-.1,4.6) -- (8.1,4.6) node[pos=.5,above] {\small $b_1$};

\foreach \x in {12,...,17}  \shade[shading=ball, ball color=green](\x,0) circle (2mm); 
\draw[line width=1.5pt,color=gray] (12, -0.2)   arc(180:360:1cm);
\draw[line width=1.5pt,color=gray] (17, 0.2)   arc(0:180:2.5cm);
\draw[line width=1.5pt,color=gray] (16, 0.2)   arc(0:180:1.5cm);
\draw[line width=1.5pt,color=gray] (15, 0.2)   arc(0:180:.5cm);
\draw (11.8, -.5) -- (11.8,-2.5);
\draw (14.2, -.5) -- (14.2,-2.5);
\draw[<->] (11.9,-2) -- (14.1,-2) node[pos=.5,below] {\small $2a_1{-}b_1$};
\draw (11.8, .5) -- (11.8,3.5);
\draw (17.2, .5) -- (17.2,3.5);
\draw[<->] (11.9,3.1) -- (17.1,3.1) node[pos=.5,above] {\small $a_1$};
\draw (16,-1)  node {$\mathbf{\ \dots\quad \dots}$};

\draw (10.2,0)  node[color=red]  {\small $\mapsto$};
\end{tikzpicture}
\end{center}

In each case, we contract the {\color{orange}orange} arcs to the right end points, and the whole 
configuration including the grey arcs meeting the first $a_1$ nodes is rotated clockwise through the angle 
180 degrees about the middle point of the first  $b_1$ vertices. We do not draw the vertices after $b_1$ 
and the arcs related to the parts $a_2,b_2$, etc., because the corresponding fragments of the meander 
graph remain intact.

A subtle point is that after a certain contraction applied to a graph with crossing, one can obtain a graph with crossing on the wrong side. It will be explained below how to handle this situation.
\end{rmk}
We say that a seaweed $\q$ {\it reduces to zero\/} if after some inductive step we obtain $\q'=0$.
(This happens if and only if at the previous stage one has $\q=\q_m(m|m)\simeq \gt{gl}_m$, and 
{\sl\bfseries Step~2}(i) applies.) The corresponding meander graph is empty. 

\begin{proof}[Proof of Theorem~\ref{thm:main}]
We use the above {\sf \bfseries inductive procedure}, which is understood as a procedure 
applied simultaneously to seaweeds and their meander graphs, see Remark~\ref{rmk:auch-for-graphs}.  
Given a seaweed $\q\subset\sone$, consider its type-{\sf D} meander graph $\Gamma_n(\q)$ and set 
\[
 \cT(\q):=\#\{\text{the cycles of } \Gamma_n(\q)\} + 
   \frac{1}{2}\#\{\text{the non-$\sigma$-stable segments of $\Gamma_n(\q)$}\} .
\] 
Let us prove that $\ind\q$ and $\cT(\q)+\epsilon(\q)$ \ behave accordingly for 
{\sl\bfseries Steps~2}{\sf (i)} and {\sl\bfseries 2}{\sf (ii)}. 

If $a_1=b_1$, then $\q=\gt{gl}_{a_1}\oplus \q'$, where $\q'\subset \gt{so}_{2(n-a_1)}$, and 
$\ind\q'=\ind\q- a_1$.  On the other hand, $\Gamma_{n-a_1}(\q')$  is obtained from $\Gamma_n(\q)$ by
deleting $2\left[ \frac{a_1}{2} \right ]$ cycles (and two segments, which are not 
$\sigma$-invariant, if $a_1$ is odd). Therefore $\cT(\q')=\cT(\q)-a_1$ and also $\epsilon(\q)=\epsilon(\q')$. 

If $a_1\ne b_1$, then $\ind\q'=\ind\q$. Basically, {\sl\bfseries Step~2}(ii) in type {\sf D} (also for the 
seaweeds with crossing) consists of two ``symmetric'' type-{\sf A} reductions applied simultaneously  
to the both ends of $\Gamma_n(\q)$. 
On the graph level, this step is interpreted as contraction of certain non-central edges, cf. the 
above pictures.
Therefore, this does not change the topological structure of the graph and the number of central edges.
Hence $\cT(\q')=\cT(\q)$ and $\epsilon$ also has the same value for $\q'$ and $\q$. 

\textbullet\quad If $\q$ has no crossing, then the {\sf\bfseries procedure} is being repeated until we end up 
with a parabolic subalgebra. This settles the problem for the seaweeds without crossing.

\textbullet\quad Suppose now that $\q$ has a crossing and $\q=\q({\un{a}}|\un{b})_{\sf c}$ with 
${\un{a}}=(a_1,\dots,a_s)$ and $\un{b}=(b_1,\dots,b_t)$, as explained in 
Section~\ref{subs:with}. Recall that  then $|{\un{a}}|=|\un{b}|=n$ and $a_s, b_t\ge 2$.
\\ \indent
Suppose that $a_s\le b_t$ and hence the crossing is  below the horizontal line. Then one can apply  
{\sl\bfseries Step~2} as long as the second composition has at least two parts. This eventually kills all the parts $b_i$ with $i<t$ and provides 
the situation, where $\un{b}=(b_t)$. So, let us assume that $t=1$, $n=b_1\ge 2$ and 
${\un{a}}=(a_1,\dots,a_s)$, as above.  If $s=1$, then $\q=\q_{\sf ec}(n)$, and the reduction terminates. If $s\ge 2$, 
then one can still apply {\sl\bfseries Step~2}(ii) to $\q=\q_n(a_1,\dots,a_s| n)_{\sf c}$.  
This replaces $\un{b}=(n)$ with another {\sl second\/} composition $\un{b}'$.  
By~\eqref{q-shtrikh}, we have 
$\un{b}'=\left\{ \begin{array}{cl}  (a_1), & \text{if } \  a_1\ge n/2 \\  
(n-2a_1,a_1), &\text{if } \    a_1\le n/2  \end{array} \right .$. \quad
That is, the last part of $\un{b}'$ is always $a_1$, while the last part of the new {\sl first\/} composition 
$\un{a}'$ is always $a_s$. If $a_1\ge a_s$, then the corresponding contraction yields the graph with 
crossing on the correct side. Then we continue the {\sf\bfseries procedure} with $\un{a'},\un{b}'$. 
If $a_1< a_s$, then $a_1< n/2$ and the passage 
\[
(a_1,\dots,a_s| n) \leadsto
(a_2,\dots,a_s| n-2a_1,a_1)=(\un{a}'|\un{b}') 
\] 
suggests that we should have obtained a meander graph with crossing {\bf above} the horizontal line.
But the contraction of edges in $\Gamma_n(\un{a}|\un{b})_{\sf c}$ yields a graph $\tilde\Gamma$ with crossing {\bf below} the horizontal line, as it was; i.e., crossing is now on the wrong side! 

To remedy this, we permute two central vertices in $\tilde\Gamma$, which merely corresponds to 
the permutation of two basis vectors in the space $\BC^{2(n-a_1)}$ of the standard representation of
$\gt{so}_{2(n-a_1)}$.
This does not change the topological structure of $\Gamma'$ and provides the meander graph, 
$\Gamma'$, of $\q'$. There are two possibilities, though. If $\Gamma'$ still has a crossing (already on the 
{\sl correct\/} side!), then we resume the {\sf\bfseries procedure}. The alternative possibility is that the crossing vanishes. This can only happen if we had two strange components 
(segments). More precisely, this happens if and only if the last part of $\un{b}'$, i.e. $a_1$, is equal to $1$, 
cf. Example~\ref{ex:chain-uncross}. In both 
cases, the value of $\epsilon$ does not change. (In the second case, we have $\epsilon=0$ before and after the permutation.) 
\\ \indent
Thus, either the crossing vanishes at some stage and the seaweed eventually reduces to zero, or 
the reduction terminates with $s=t=1$ and $a_1=b_1=:m$, i.e., $\q=\q_{\sf ec}(m)$.

Since Eq.~\eqref{eq:index-D} is already verified for the parabolic subalgebras and all $\q_{\sf ec}$ by 
Lemma~\ref{Ind-base}, the result follows. 
\end{proof}

\begin{rmk}    
The last part of the proof of Theorem~\ref{thm:main} shows that there are two alternatives for the 
{\sf\bfseries inductive procedure} applied to the seaweeds with crossing. Either $\q$ has one strange component (cycle), and then it reduces to some $\q_{\sf ec}(m)$; or $\q$ has two strange components
(segments), and then the crossing eventually vanishes and $\q$ reduces to zero.
\end{rmk}
\begin{ex}           \label{ex:chain-uncross}
Let us apply the inductive procedure to the seaweed $\q_5(2,3|1,4)_{\sf c}$, see Fig.~\ref{pikcha:D5-2}. 
The chain of seaweeds and reduction steps is
\[ 
\q_5(2,3|1,4)_{\sf c} \leadsto \q_4(1,3|4)_{\sf c} \leadsto 
\q_3(3|2,1)_{\sf c} \stackrel{\ast}{\leadsto} \q_3(3|2,1) \leadsto \\
\q_2(2|1,1) \leadsto \q_1(1|1) \leadsto 0 \ .
\] 
That is, this seaweed reduces to zero. The second step gives us the graph $\Gamma_3(3|2,1)_{\sf c}$
with crossing on the wrong side. Therefore, the next
step marked with the asterisk is the permutation of vertices
$n$ and $n+1$ (with $n=3$), which results in disappearance of crossing.
The corresponding chain of meander graphs is depicted in Fig.~\ref{pikcha:chain}. The edge(s) that are going to be contracted on the next step are depicted in {\color{orange}orange}.
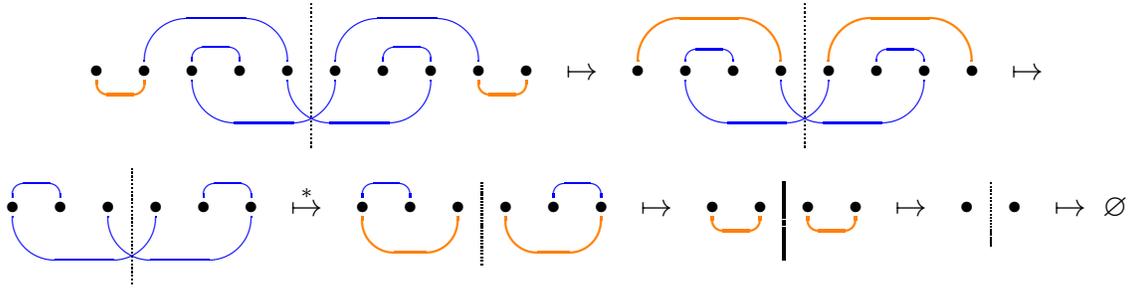
\begin{figure}[htb]
\setlength{\unitlength}{0.025in}
\begin{center}
\begin{picture}(100,25)(15,-2)

\multiput(20,3)(10,0){10}{\circle*{2}}

{\color{blue}
\put(45,5){\oval(10,6)[t]}
\put(45,5){\oval(30,18)[t]}
\put(85,5){\oval(10,6)[t]}
\put(85,5){\oval(30,18)[t]}

\put(55,1){\oval(30,18)[b]}
\put(75,1){\oval(30,18)[b]}
}
\qbezier[33](65,-13),(65,2),(65,17)
\thicklines
{\color{orange}
\put(25,1){\oval(10,6)[b]}
\put(105,1){\oval(10,6)[b]}
}
\end{picture} 
\ \raisebox{1ex}{$\mapsto$} \ 
\begin{picture}(80,25)(25,-2)

\multiput(30,3)(10,0){8}{\circle*{2}}

{\color{blue}
\put(45,5){\oval(10,5)[t]}
\put(85,5){\oval(10,5)[t]}

\put(55,1){\oval(30,18)[b]}
\put(75,1){\oval(30,18)[b]}
}
\qbezier[33](65,-13),(65,2),(65,17)
\thicklines
{\color{orange}
\put(45,5){\oval(30,18)[t]}
\put(85,5){\oval(30,18)[t]}
}
\end{picture} 
\ \raisebox{1ex}{$\mapsto$} \\
\begin{picture}(60,28)(35,-2)

\multiput(40,3)(10,0){6}{\circle*{2}}

{\color{blue}
\put(45,5){\oval(10,6)[t]}
\put(85,5){\oval(10,6)[t]}

\put(55,1){\oval(30,18)[b]}
\put(75,1){\oval(30,18)[b]}
}
\qbezier[30](65,-13),(65,-1),(65,11)
\end{picture} 
\ \raisebox{1ex}{$\stackrel{\ast}{\mapsto}$} \
\begin{picture}(60,28)(35,-2)

\multiput(40,3)(10,0){6}{\circle*{2}}

{\color{blue}
\put(45,5){\oval(10,6)[t]}
\put(85,5){\oval(10,6)[t]}

}
\qbezier[25](65,-9),(65,0),(65,8)
\thicklines
{\color{orange}
\put(50,1){\oval(20,15)[b]}
\put(80,1){\oval(20,15)[b]}
}
\end{picture} 
\ \raisebox{1ex}{$\mapsto$} \ 
\begin{picture}(40,28)(45,-2)

\multiput(50,3)(10,0){4}{\circle*{2}}
\thicklines
{\color{orange}
\put(55,1){\oval(10,6)[b]}
\put(75,1){\oval(10,6)[b]}
}
\qbezier[22](65,-8),(65,0),(65,8)
\end{picture} 
\ \raisebox{1ex}{$\mapsto$} \ 
\begin{picture}(20,28)(55,-2)

\multiput(60,3)(10,0){2}{\circle*{2}}

\qbezier[20](65,-5),(65,0),(65,8)
\end{picture} 
\ \raisebox{1ex}{$\mapsto \ \varnothing$}
\end{center}
\caption{The reduction steps for $\Gamma_5(2,3|1,4)_{\sf c}$}   \label{pikcha:chain}
\end{figure}

For all graphs here, one has $\epsilon=0$.
\end{ex}

\section{Miscellaneous remarks and applications}
\label{sect:applic}

\subsection{Generic stabilisers}
Given a Lie algebra $\rr$, write $\rr_\ast$ for a generic stabiliser of the coadjoint representation
$(\rr,\ads)$, if it exists. For any seaweed $\q$ in types {\sf A} and {\sf C},  
a generic stabiliser $\q_\ast$ exists; 
moreover, it is a torus, see~\cite{Dima03-b}. 
For all other simple Lie algebras, there are parabolic subalgebras $\p$ such that the coadjoint representation has no generic stabilisers, 
see \cite[3.2]{ty-jA} for $\mathsf D_4$ and \cite[Section\,6]{Dima03-b} in general. 

It is shown in~\cite{Dima03-b} that the {\sf\bfseries inductive procedure} of Section~\ref{sect:main}
can be used for computing a generic stabiliser and proving its existence. 
In {\sl\bfseries Step~2}(i), for a generic $\beta\in\gt q^*$, we have 
$\q_\beta=\q'_{\beta'}\oplus \te_{a_1}$, where
$\te_{a_1}$ is a maximal torus in $\gt{gl}_{a_1}$ and $\beta'$ is the restriction of $\beta$ to 
$\q'$.  Therefore, if $\q'_{\beta'}$ is a generic stabiliser for $(\q',\ads)$, then $\q_\beta$ is a generic stabiliser for $(\q,\ads)$.
In {\sl\bfseries Step~2}(ii), the situation is even better. If $(\q',\ads)$ has a generic stabiliser, 
say $\q'_\ast$, then $\q'_\ast$ is a generic stabiliser for $(\q,\ads)$, too. 

In type {\sf D}, a seaweed $\q$ without crossing reduces to a parabolic subalgebra $\p$. Hence
$\q_\ast$ exists and is a torus if and only if $\p_\ast$ exists and is a torus. 
The parabolic subalgebras  $\p\subset\gt{so}_m$ such that $\p_\ast$ is reductive (and therefore is a torus) 
are classified in~\cite[Th\'eor\`eme\,29]{DKT}, cf. also~\cite[Lemma~2.3 \& Def.~5.7]{MY12}.

For $\q_{\sf ec}(n)$, a generic stabiliser for  the coadjoint action is a torus of dimension $n-2$. 
And a seaweed with crossing having two strange components reduces to zero as can be seen from 
the proof of Theorem~\ref{thm:main}. Thus, we happily arrive at the following conclusion. 

\begin{prop}\label{prop-red-st} 
A seaweed with crossing possesses a non-empty open subset $U\subset \gt q^*$ such 
that $\gt q_\beta$ is a torus for each $\beta\in U$ and all these stabilisers are conjugate by elements 
of the connected group $\exp(\gt q)=Q$, i.e., $\gt q_\ast=\gt q_\beta$. 
\end{prop}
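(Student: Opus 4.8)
The plan is to deduce Proposition~\ref{prop-red-st} directly from the analysis already carried out in the proof of Theorem~\ref{thm:main}, combined with the behaviour of generic stabilisers under the \textbf{inductive procedure} recorded just above the statement. The key observation is that the proof of Theorem~\ref{thm:main} established a dichotomy for seaweeds with crossing: either $\q$ has a single strange component (a cycle) and the procedure terminates at $\q_{\sf ec}(m)$ for some $m\le n$, or $\q$ has two strange components (segments) and the crossing eventually vanishes, after which $\q$ reduces to zero. The whole argument therefore splits into these two cases, and in each I would track how a generic stabiliser is transported back along the reduction chain.

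First I would set up the transport mechanism. For \textsl{Step~2}(i), where $a_1=b_1$, we have $\q\cong\gt{gl}_{a_1}\oplus\q'$ and, for generic $\beta\in\q^*$, $\q_\beta=\q'_{\beta'}\oplus\te_{a_1}$ with $\te_{a_1}$ a maximal torus of $\gt{gl}_{a_1}$; so if $\q'_{\beta'}$ is a torus forming a generic stabiliser, then so is $\q_\beta$, and all such $\q_\beta$ are $Q$-conjugate provided the corresponding statement holds for $\q'$. For \textsl{Step~2}(ii), where $a_1\ne b_1$, the text already records that a generic stabiliser of $(\q',\ads)$ is simultaneously a generic stabiliser of $(\q,\ads)$; the subtle extra point I must check is that the single auxiliary permutation of two central vertices (used to move a crossing from the wrong side to the correct side) corresponds to conjugation by an element of $Q$—indeed it is the permutation of two basis vectors of the standard $\gt{so}_{2(n-a_1)}$-module, hence an inner automorphism, so it does not disturb either the torus property or the $Q$-conjugacy of the stabilisers.

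Then I would handle the two base cases. For $\q_{\sf ec}(m)$, the explicit semidirect-product model from Lemma~\ref{Ind-base} together with the Ra\"{i}s-type computation shows that a generic stabiliser exists and is a torus of dimension $m-2$; moreover, since $\q_{\sf ec}$ is an algebraic Lie algebra for which a generic stabiliser of the coadjoint action exists, the generic stabilisers are all conjugate under $\exp(\q_{\sf ec})$, giving $\q_\ast=\q_\beta$ on a dense open $U$. For a seaweed reducing to zero, the terminal algebra has $\q^*=0$ and the generic stabiliser is trivially the zero torus; pulling this back through the chain by the two transport rules above yields a generic stabiliser that is a torus and is $Q$-conjugate throughout. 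Assembling the two cases proves the proposition.

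The main obstacle, as I see it, is not the torus property (which is essentially bookkeeping through the reduction) but the conjugacy claim $\q_\ast=\q_\beta$ for \emph{all} $\beta$ in a single open set $U$, rather than merely the existence of a generic stabiliser for each $\beta$ separately. The cleanest route is to invoke the result of~\cite{Dima03-b} that the inductive procedure computes a genuine generic stabiliser (so that a dense open $U$ with $Q$-conjugate stabilisers exists whenever the reduced algebra has one), and then to verify that each reduction step, including the central-vertex permutation, preserves this open-orbit structure. The permutation step is the delicate one to phrase carefully, since it is the only operation that is not one of the standard reductions of~\cite{Dima01}; I would argue that it is an element of $\Aut(\q)$ induced by an inner automorphism of the ambient $\sone$, so it identifies the two candidate open sets and their stabilisers without loss of genericity.
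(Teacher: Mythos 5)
Your proposal follows essentially the same route as the paper: the dichotomy from the proof of Theorem~\ref{thm:main} (one strange cycle $\Rightarrow$ reduction to $\q_{\sf ec}(m)$, whose generic stabiliser is a torus of dimension $m-2$; two strange segments $\Rightarrow$ reduction to zero), transported back along the chain via the recorded behaviour of generic stabilisers under \textsl{Step~2}(i) and \textsl{Step~2}(ii). The only quibble is that the central-vertex permutation is induced by an orthogonal transformation of determinant $-1$, hence an \emph{outer} (diagram) automorphism of $\gt{so}_{2(n-a_1)}$ rather than an inner one, but this is still a Lie algebra isomorphism identifying the two seaweeds and so does not affect the argument.
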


\subsection{Strongly quasi-reductive seaweeds}
Following~\cite{MY12}, a Lie algebra $\q=\Lie Q$ is said to be {\it strongly quasi-reductive\/} if there is a
$\gamma\in\gt q^*$ such that $Q_\gamma$ is reductive. 
A more general notion of {\it quasi-reductive\/} Lie algebras is considered in \cite{DKT}. However, these two 
coincide for the seaweed subalgebras, since the centre of any $\q(S,T)$ consists of semisimple elements.

By \cite[Th\'eor\`eme\,9]{DKT}, if $\q$ is strongly quasi-reductive, 
then there is a reductive stabiliser $Q_\gamma$ (with $\gamma\in \q^*$) such that, up to conjugation, 
any other reductive stabiliser $Q_\beta$ (with $\beta\in\gt q^*$) is contained in $Q_\gamma$. 
In~\cite{MY12}, such a group $Q_\gamma$ is  called a {\it maximal reductive stabiliser} of $\q$, \MRS 
for short. For a seaweed  $\q=\q^{\sf A}(\un{a}|\un{b})\subset\gln$, an \MRS of $\q$ can be described in 
terms of $\Gamma^{\sf A}(\un{a}|\un{b})$~\cite[Theorem~5.3]{MY12}. 
In particular, an \MRS of $\gt q$ is isomorphic to $\GL_2$ if and only if 
$\Gamma^{\sf A}(\un{a}|\un{b})$ is a single cycle. 

For any seaweed with crossing, there is $\beta\in \q^*$ such that $\q_\beta$ is a torus (Proposition~\ref{prop-red-st}). Therefore, the seaweeds with crossing are strongly quasi-reductive. All seaweeds $\q$ in type {\sf A} or {\sf C} are also strongly quasi-reductive for the same reason, 
$\q_\ast$ is a torus~\cite{Dima03-b}. 


\subsection{Frobenius seaweeds in type {\sf D}}
A Lie algebra $\q$ is said to be {\it Frobenius} if $\ind\q=0$. 
Such Lie algebras are quite popular nowadays. Frobenius seaweeds in type ${\sf A}_n$ are rather 
mysterious. Even the asymptotic behaviour of their distribution remains unknown. Partial results on  
the Frobenius seaweeds in type {\sf C} are obtained in \cite{SW-C}.  Let us see what happens in
type {\sf D}. 
  
\begin{prop}    \label{prop:fro-cr}
Let $\q\subset\sone$ be a seaweed \emph{with} crossing. If\/ $\Gamma_n(\q)$ has two strange components,
then $\q$ cannot be Frobenius. If\/ $\Gamma_n(\q)$ has one strange component and $\ind\q=0$, then
$\Gamma_n(\q)$ is a single cycle and $n$ is even. Moreover,
there is a bijection between the standard Frobenius seaweeds $\q$
with crossing (up to the transposition $\alpha_{n-1}\longleftrightarrow\alpha_n$) and 
the standard seaweeds $\es\subset\gln$ such that an\/ \MRS of\/ $\es$ is $\GL_2$.     
\end{prop}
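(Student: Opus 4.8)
The plan is to extract all three assertions from the index formula of Theorem~\ref{thm:main}, read against the surgery description of $\Gamma_n(\q)$ from Section~\ref{subs:with}. Write $\q=\q_n(\un{a}|\un{b})_{\sf c}$, so that $|\un{a}|=|\un{b}|=n$ and $a_s,b_t\ge 2$, and recall that $\Gamma_n(\q)$ is produced from $\Gamma_n(\check{\q})=L\sqcup\sigma(L)$, the disjoint union of the type-{\sf A} graph $L=\Gamma^{\sf A}(\un{a}|\un{b})$ on the vertices $\{1,\dots,n\}$ and its mirror image, by deleting the widest central arc $e$ attached to the smaller of the two middle parts $a_s,b_t$ and reconnecting the four freed endpoints crosswise. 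For the first assertion, if $\Gamma_n(\q)$ has two strange components then, by the observation closing Section~\ref{subs:with}, these are two segments interchanged by $\sigma$ and hence non-$\sigma$-stable; since $\langle\diamond_2\rangle$ gives $\epsilon=0$ here, Theorem~\ref{thm:main} yields $\ind\q\ge\tfrac{1}{2}\cdot 2=1$, so $\q$ is not Frobenius.

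Next I would treat the one-strange-cycle case, where $\epsilon=-1$. The key is the bookkeeping of the surgery: the deleted edge $e$ joins two vertices of $L$, and the strange component is a single cycle precisely when $e$ lies on a cycle $C$ of $L$ --- the surgery then fuses $C$ with $\sigma(C)$ into that strange cycle, leaving every other component of $L$ intact alongside its disjoint mirror image. Thus if $L$ has $c$ cycles and $s$ segments, $\Gamma_n(\q)$ has $1+2(c-1)$ cycles and $2s$ segments; moreover all these segments are non-$\sigma$-stable, since by the same observation the only arcs crossing the $\sigma$-mirror lie in the strange cycle. Hence $\ind\q=0$ reads $2(c-1)+s=0$, forcing $c=1$ and $s=0$: both $L$ and $\Gamma_n(\q)$ are a single cycle. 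Finally, a type-{\sf A} meander graph on $n$ vertices is a single cycle only when every vertex has degree $2$, i.e. carries both a lower and an upper arc; a vertex lacks its lower (resp.\ upper) arc exactly when it is the middle of an odd part of $\un{a}$ (resp.\ $\un{b}$), so every part of $\un{a}$ and of $\un{b}$ is even, and in particular $n=|\un{a}|$ is even.

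For the bijection I would invoke the criterion recalled in Section~\ref{sect:applic}: an \MRS of $\es=\q^{\sf A}(\un{a}|\un{b})\subset\gln$ equals $\GL_2$ exactly when $L=\Gamma^{\sf A}(\un{a}|\un{b})$ is a single cycle. Combined with the previous paragraph, $\q$ is a Frobenius seaweed with crossing iff $L$ is a single cycle iff $\es$ has an \MRS isomorphic to $\GL_2$, and I would send $\q_n(\un{a}|\un{b})_{\sf c}\mapsto\q^{\sf A}(\un{a}|\un{b})$. Conversely any ordered pair $(\un{a},\un{b})$ of compositions of $n$ with $L$ a single cycle has all parts even, so automatically $a_s,b_t\ge 2$ and the pair defines a genuine crossing seaweed; thus both sides are parametrized by the very same set of pairs. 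The transposition $\alpha_{n-1}\leftrightarrow\alpha_n$ is the diagram automorphism interchanging the two standard crossing seaweeds (``$S$ not admissible'' versus ``$T$ not admissible'') attached to a fixed $(\un{a},\un{b})$; quotienting by it leaves exactly one class per ordered pair, matching the single $\es$ per pair, which is the asserted bijection.

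The main obstacle I anticipate is the surgery bookkeeping of the second paragraph --- verifying that \emph{one} strange cycle corresponds exactly to $e$ sitting on a cycle of $L$ and that the resulting component counts are indeed $1+2(c-1)$ cycles and $2s$ segments --- together with pinning down the action of the transposition on the composition data precisely enough that ``up to transposition'' matches ``ordered pairs $(\un{a},\un{b})$'' on the nose.
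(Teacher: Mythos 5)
Your proposal is correct and follows essentially the same route as the paper: the two-strange-segments case is ruled out via $\epsilon=0$ and the non-$\sigma$-stable segment count, the one-strange-cycle case is forced to be a single cycle by Eq.~\eqref{eq:index-D} with $\epsilon=-1$, and the bijection is the map $\q_n(\un{a}|\un{b})_{\sf c}\mapsto\q^{\sf A}(\un{a}|\un{b})$ combined with the single-cycle criterion for an \MRS being $\GL_2$. Your explicit surgery bookkeeping (the $1+2(c-1)$ cycles and $2s$ segments) and the all-parts-even argument for the parity of $n$ merely fill in steps the paper leaves as ``easily seen''.
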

\begin{proof}
If $\q\subset\sone$ has a crossing and $\Gamma_n(\q)$ has two strange connected components, then 
$\epsilon=0$ and there are at least two segments that are not $\sigma$-stable. Therefore $\ind\q>0$ regardless of the parity of $n$.

Suppose that $\q=\q(\un{a}|\un{b})_{\sf c}$, the strange component of 
$\Gamma(\un{a}|\un{b})_{\sf c}$ is a cycle, and $\ind\q=0$. The meander graph of a seaweed with crossing
has no $\sigma$-stable segments (Section~\ref{subs:with}). Therefore, by Eq.~\eqref{eq:index-D}, this 
strange cycle must be the only component of $\Gamma(\un{a}|\un{b})_{\sf c}$.
It is then easily seen that $\es:=\q^{\sf A}(\un{a}|\un{b})\subset\gln$ has the property that 
$\Gamma^{\sf A}(\un{a}|\un{b})$ is a single cycle and therefore an \MRS of $\es$ is isomorphic to  $\GL_2$.
If we invoke the ``three step'' construction of $\Gamma_n(\q)=\Gamma(\un{a}|\un{b})_{\sf c}$ in 
Section~\ref{subs:with}, then $\Gamma^{\sf A}(\un{a}|\un{b})$ represents the left hand side half of the graph
$\Gamma_n(\check\q)$ obtained in Step~{\sf 2)}.
Conversely, if  $\un{a}$ and $\un{b}$ are two compositions of $n$ such that 
$\Gamma^{\sf A}(\un{a}|\un{b})$ is a single cycle, then so is 
$\Gamma(\un{a}|\un{b})_{\sf c}$ (for $\q=\q(\un{a}|\un{b})_{\sf c}\subset\sone$). 
See a sample in Figure~\ref{pikcha:D6-1} below. 
It remains to observe that if $\Gamma^{\sf A}(\un{a}|\un{b})$ is a single cycle, then $n$ is necessarily even.
\end{proof}

\begin{ex}
Let $\q=\q(S,T)\subset\gt{so}_{12}$ with $S=\{\alpha_1, \alpha_3,\alpha_4, \alpha_6\}$, 
$T=\{\alpha_1, \alpha_2,\alpha_3, \alpha_4, \alpha_5\}$. 
Then $\un{a}=(2,4)$, $\un{b}=(6)$ and $\Gamma_{6}(\q)$ 
is a single strange cycle. Hence $\ind\q=0$. 
To illustrate the bijection of Proposition~\ref{prop:fro-cr}, we also draw the graph 
$\Gamma^{\sf A}(2,4|6)$ in Figure~\ref{pikcha:D6-1}.  

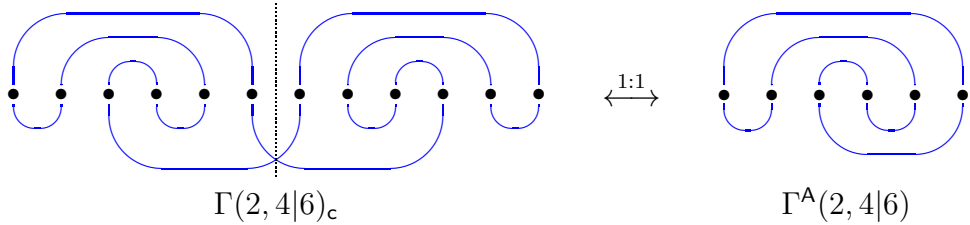
\begin{figure}[htb]
\setlength{\unitlength}{0.025in}
\begin{center}
\begin{picture}(110,35)(25,-15)

\put(62,-22){$\Gamma(2,4|6)_{\sf c}$}
\multiput(20,3)(10,0){12}{\circle*{2}}

{\color{blue}
\put(45,5){\oval(50,30)[t]}
\put(105,5){\oval(50,30)[t]}
\put(45,5){\oval(30,20)[t]}
\put(105,5){\oval(30,20)[t]}
\put(45,5){\oval(10,10)[t]}
\put(105,5){\oval(10,10)[t]}

\put(55,1){\oval(10,10)[b]}
\put(25,1){\oval(10,10)[b]}%
\put(60,1){\oval(40,27)[b]}%
\put(125,1){\oval(10,10)[b]}%
\put(95,1){\oval(10,10)[b]}
\put(90,1){\oval(40,27)[b]}%
}
\qbezier[42](75,-14),(75,4),(75,22)
\end{picture}
\quad \raisebox{5ex}{$\stackrel{1:1}{\longleftrightarrow}$} \quad
\begin{picture}(60,35)(15,-15)

\put(32,-22){$\Gamma^{\sf A}(2,4|6)$}
\multiput(20,3)(10,0){6}{\circle*{2}}

{\color{blue}
\put(45,5){\oval(50,30)[t]}
\put(45,5){\oval(30,20)[t]}
\put(45,5){\oval(10,10)[t]}

\put(55,1){\oval(30,21)[b]}
\put(55,1){\oval(10,11)[b]}
\put(25,1){\oval(10,11)[b]}
}
\end{picture}
\end{center}
\vspace{1ex}
\caption{The bijection of Proposition~\ref{prop:fro-cr}}
\label{pikcha:D6-1}
\end{figure}
\end{ex}


\begin{ex}    \label{ex:tri-grafa}
The Lie algebra $\gt{so}_8$ has three different non-equivalent matrix realisations 
($8$-dimensional representations) corresponding to the fundamental weights $\varpi_1,\varpi_3$, and 
$\varpi_4$. Therefore, each seaweed acquires three (usually different) meander graphs. Yet, 
\eqref{eq:index-D} gives the same value for all possible graphs. \\ \indent 
Consider $\q=\q(S,T)\in\gt{so}_8$ with $S=\{\alpha_1,\alpha_3\}$ and $T=\{\alpha_1,\alpha_2,\alpha_4\}$. 
Then
\\ \indent
\textbullet \quad for the realisation associated with $\varpi_1$, $\q$ has a crossing; more precisely,
$\q=\q(2,2|4)_{\sf c}$;

\textbullet \quad for $\varpi_3$, there is no crossing and %
$\q=\q_4(2,2|1)$; 

\textbullet \quad for $\varpi_4$, one obtains $\q=\q_4(1,1|4)$.  
\\
The corresponding meander graphs are presented in~Fig.~\ref{pikcha:so(8)}.
\begin{figure}[htb]
\setlength{\unitlength}{0.025in}
\begin{center}
\begin{picture}(75,30)(17,-8)
\put(46,20){for $\varpi_1$}
\multiput(20,3)(10,0){8}{\circle*{2}}

{\color{blue}
\put(35,5){\oval(10,7)[t]}
\put(35,5){\oval(30,17)[t]}
\put(75,5){\oval(10,7)[t]}
\put(75,5){\oval(30,17)[t]}

\put(50,1){\oval(20,17)[b]}
\put(25,1){\oval(10,7)[b]}
\put(60,1){\oval(20,17)[b]}
\put(85,1){\oval(10,7)[b]}

\qbezier[45](55,-15),(55,2),(55,20)}
\end{picture}
\quad 
\begin{picture}(75,30)(17,-8)
\put(46,20){for $\varpi_3$}
\multiput(20,3)(10,0){8}{\circle*{2}}

{\color{blue}
\put(55,5){\oval(10,7)[t]}
\put(55,5){\oval(30,17)[t]}
\put(55,5){\oval(50,22)[t]}

\put(45,1){\oval(10,7)[b]}
\put(25,1){\oval(10,7)[b]}
\put(65,1){\oval(10,7)[b]}
\put(85,1){\oval(10,7)[b]}

\qbezier[45](55,-15),(55,2),(55,20)}
\end{picture}
\quad 
\begin{picture}(75,30)(17,-8)
\put(46,20){for $\varpi_4$}
\multiput(20,3)(10,0){8}{\circle*{2}}

{\color{blue}
\put(35,5){\oval(10,7)[t]}
\put(35,5){\oval(30,17)[t]}
\put(75,5){\oval(10,7)[t]}
\put(75,5){\oval(30,17)[t]}

\put(55,1){\oval(30,17)[b]}
\put(55,1){\oval(10,7)[b]}

\qbezier[45](55,-15),(55,2),(55,20)}
\end{picture}
\end{center}
\caption{Three meander graphs for one seaweed in $\gt{so}_8$}   \label{pikcha:so(8)}
\end{figure}
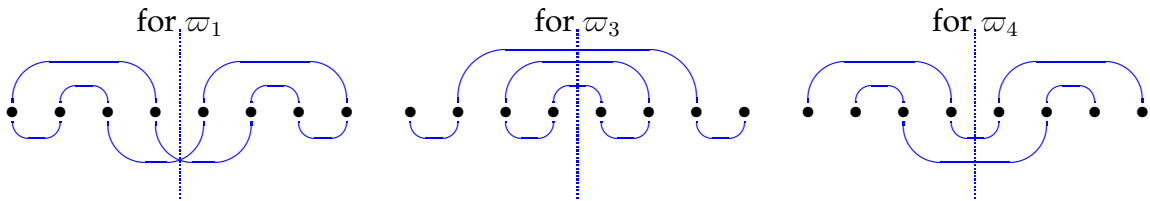

One readily verifies that Theorem~\ref{thm:main} yields $\ind\q=0$ for all three graphs.
\end{ex}


\begin{prop}           \label{prop:frob-no-cr}
Let $\q\subset\sone$ be a standard seaweed \emph{without} crossing.
\\ \indent
{\sf (i)} If\/ $\q$ is Frobenius, then $\epsilon=\epsilon(\q)\in \{0,-1\}$.   
\\ \indent
{\sf (ii)} There is a bijection between the standard Frobenius seaweeds $\q$ 
such that $\epsilon=0$ 
and the standard Frobenius seaweeds in $\spn$ having an even number of central arcs.  
\\ \indent
{\sf (iii)} If $\epsilon=-1$, then $\q$ is Frobenius  if and only if\/  
$\Gamma_n(\q)$ has an odd number of central arcs, all of which are on one and the same side of the horizontal line, 
exactly one cycle going through the vertices $n$ and $n+1$, and no segments that are not $\sigma$-stable. 
\end{prop}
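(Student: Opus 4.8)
The backbone of the proof is the identity of meander graphs across types {\sf C} and {\sf D}. By Definition~\ref{def:graf-bez} a seaweed without crossing has $\Gamma_n(\un a|\un b)=\Gamma^{\sf A}(\un{\tilde a}|\un{\tilde b})$ with $\un{\tilde a}=(\un a,2d,\un a^{-1})$, $d=n-|\un a|$, which is verbatim the symmetric composition used in Section~\ref{subs:BC} to define $\Gamma^{\sf C}$; so for a fixed pair $(\un a,\un b)$ the graphs in types {\sf C} and {\sf D} are the same $2n$-vertex type-{\sf A} graph $\Gamma$. Writing $I(\Gamma):=\#\{\text{cycles of }\Gamma\}+\frac{1}{2}\#\{\text{non-}\sigma\text{-stable segments of }\Gamma\}$, Eq.~\eqref{eq:index-C} says the index of the type-{\sf C} seaweed equals $I(\Gamma)$, while Eq.~\eqref{eq:index-D} gives $\ind\q=I(\Gamma)+\epsilon$ for the type-{\sf D} seaweed. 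I also record that the central arcs below and above number $m_a=n-|\un a|$ and $m_b=n-|\un b|$ (cf.\ the proof of Lemma~\ref{Ind-base}), so the number of central arcs is $m_a+m_b$ and, by rule $\langle\diamond_1\rangle$, $\epsilon=0$ exactly when $m_a+m_b$ is even; and that, since the compositions of $n-1$ are excluded (Remark~\ref{rmk:exclude}), one always has $m_a,m_b\neq 1$. Part {\sf (i)} is then immediate: $I(\Gamma)\ge 0$, so $\ind\q=0$ forces $\epsilon\le 0$, i.e.\ $\epsilon\in\{0,-1\}$.

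For {\sf (ii)} I send $\q=\q_n(\un a|\un b)$ with $\epsilon=0$ to the seaweed in $\spn$ with the same pair $(\un a,\un b)$. Since $\epsilon=0$ gives $\ind\q=I(\Gamma)$, the source is Frobenius iff the image is, and $\epsilon=0$ is exactly the evenness of the number $m_a+m_b$ of central arcs. The one delicate point is that the inverse map must land among admissible type-{\sf D} data, i.e.\ a Frobenius type-{\sf C} seaweed with an even number of central arcs must have $m_a,m_b\neq 1$. For this I use the \emph{innermost-arc observation}: for each $k\le\min(m_a,m_b)$ both the $k$-th central arc below and the $k$-th central arc above join $n-k+1$ to $n+k$, so $\{n-k+1,n+k\}$ is an isolated $2$-cycle. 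Hence if $m_a=1$, evenness forces $m_b$ odd, so $m_b\ge 1$, producing a $2$-cycle and $I(\Gamma)\ge 1$, contradicting Frobenius (and symmetrically for $m_b=1$). Thus the inverse map is well defined, and the two assignments are mutually inverse.

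For {\sf (iii)} assume $\epsilon=-1$, so $m_a+m_b$ is odd; take $m_a\le m_b$. The key claim is that Frobenius forces $m_a=0$. If $m_a\ge 1$ then in fact $m_a\ge 2$ (as $m_a\neq 1$), and the innermost-arc observation yields $\min(m_a,m_b)=m_a\ge 2$ isolated $2$-cycles, whence $\ind\q\ge 2+\epsilon=1>0$. So $m_a=0$: all central arcs lie above the horizontal line and $m_b$ is odd. Because $\epsilon=-1$ and not $+1$, rule $\langle\diamond_1\rangle$ forces the arc between $n$ and $n+1$ to lie on a cycle rather than on a segment. Now $\ind\q=I(\Gamma)-1$, and this cycle already contributes $1$ to $I(\Gamma)$; therefore $\ind\q=0$ iff it is the \emph{only} cycle and there are no non-$\sigma$-stable segments, which is exactly the stated list of conditions. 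Reading the equivalences in reverse recovers $\epsilon=-1$ from those conditions, closing the iff.

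The main obstacle is the combinatorial bookkeeping shared by {\sf (ii)} and {\sf (iii)}: one must verify carefully that the innermost central arcs on opposite sides genuinely close up into isolated $2$-cycles, and that the exclusion $m_a,m_b\neq 1$ (equivalently $|\un a|,|\un b|\neq n-1$) is precisely what eliminates the borderline single-$2$-cycle configurations. Once this is pinned down, formula~\eqref{eq:index-D} forces the outcome in every case.
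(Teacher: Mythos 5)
Your proof is correct and follows essentially the same route as the paper's: part {\sf (i)} directly from Eq.~\eqref{eq:index-D}, part {\sf (ii)} by identifying $\Gamma_n(\q)$ with the type-{\sf C} meander graph of the seaweed in $\spn$ with the same pair of compositions, and part {\sf (iii)} by the case analysis $m_a=0$ versus $m_a\ge 2$, with $m_a=1$ ruled out by Remark~\ref{rmk:exclude}. Your explicit innermost-arc observation (each $k\le\min(m_a,m_b)$ yields an isolated $2$-cycle on $\{n-k+1,n+k\}$) and the well-definedness check for the inverse map in {\sf (ii)} merely spell out the cycle counts that the paper asserts without detail.
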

\begin{proof}  
{\sf (i)} 
If $\q$ is Frobenius, then Eq.~\eqref{eq:index-D} shows that $\epsilon=1$ is not allowed. 

{\sf (ii)}  If $\epsilon=0$ and $\ind\gt q=0$, then $\Gamma_n(\q)$ has 
no cycles, all its segments are $\sigma$-stable, and $m_b-m_a$ is even,
see  Eq.~\eqref{eq:index-D}.  Assume that $m_b\ge m_a$. 
If $m_a>0$, then $\Gamma_n(\q)$ has a cycle, a contradiction! Hence $m_a=0$ and $m_b$ is even. 
The graph $\Gamma_n(\q)$ can also be regarded as the type-{\sf C} meander graph of a seaweed 
$\check{\q}\subset\spn$, and $\ind\q=0$ if and only if $\ind{\check{\q}}=0$,
cf. Theorem~\ref{thm:main} and~\cite[Theorem\,3.2]{SW-C}.
  
({\sf iii})  If $\epsilon=-1$ and  $\ind\q=0$, then $m_b-m_a$ is odd and $\Gamma_n(\q)$ has either a single cycle or two non $\sigma$-stable segments. Assume that $m_b\ge m_a$. If $m_a\ge 2$, then 
$\Gamma_n(\q)$ contains at least two cycles and $\ind\gt q>0$, a contradiction! If $m_a=1$, then 
$n-|\un{a}|=1$, which is not allowed, see Remark~\ref{rmk:exclude}. 
Hence $m_a=0$ and $m_b$ is odd. Since $\epsilon\ne 1$, the central arc between the vertices $n$ 
and $n+1$ belongs to a cycle, which is the unique cycle in $\Gamma_n(\q)$. Hence all the segments must be 
$\sigma$-stable. It remains to observe that this argument can be reversed.
\end{proof}


\begin{thebibliography}{Pa9}

\bibitem{dk00}
{\sc V.~Dergachev} and {\sc A.A.~Kirillov}. 
Index of Lie algebras of seaweed type, 
{\it J. Lie Theory}, {\bf 10}\,(2000), 331--343.

\bibitem{dvor}
{\sc A.~Dvorsky}.
Index of parabolic and seaweed subalgebras of $\son$, 
{\it Linear Algebra Appl.}, {\bf 374}\,(2003), 127--142. 

\bibitem{DKT} {\sc M.~Duflo, M.~Khalgui}, and {\sc P.~Torasso}.
Alg{\`e}bres de Lie quasi-r{\'e}ductives, 
{\it  Transform. Groups}, {\bf 17}\,(2012), no.\,2, 417--470. 


\bibitem{jos77}  {\sc A.~Joseph}.
A preparation theorem for the prime spectrum of a semisimple Lie algebra. 
{\it J. Algebra}, {\bf 48}\,(1977), no.\,2, 241--289.
  
\bibitem{jos}
{\sc A.~Joseph}.  
On semi-invariants and index for biparabolic (seaweed) algebras, I, 
{\it J. Algebra}, {\bf 305}\,(2006), 487--515.

\bibitem{j2}
{\sc A.~Joseph}.  
On semi-invariants and index for biparabolic (seaweed) algebras, II, 
{\it J. Algebra}, {\bf 312}\,(2007), 158--193.

\bibitem{MY12} 
{\sc A.~Moreau} and {\sc O.~Yakimova}. 
Coadjoint orbits of reductive type of parabolic and seaweed Lie subalgebras, 
{\it Int. Math. Res. Notices}, 2012, no.\,19, 4475--4519.
 
\bibitem{Ooms}   {\sc A.I.~Ooms}.
Computing invariants and semi-invariants by means of Frobenius Lie algebras, 
{\it J. Algebra}, {\bf 321}\,(2009),  1293--1312.
 
\bibitem{Dima01} {\sc D.~Panyushev}. 
Inductive formulas for the index of seaweed Lie algebras,  
{\it Moscow Math. J.}, {\bf 1}\,(2001), 221--241.

\bibitem{Dima03-b}  {\sc D.~Panyushev}. 
An extension of Ra\"{i}s'  theorem and seaweed subalgebras of simple Lie algebras,
{\it Ann. Inst. Fourier} (Grenoble), {\bf 55}\,(2005), no. 3, 693--715.

\bibitem{SW-C} {\sc D.~Panyushev} and {\sc O.~Yakimova}. 
On seaweed subalgebras and meander graphs in type {\sf C}, 
{\it Pacific J. Math.}, {\bf 285}, no.\,2~(2016), 485--499. 

\bibitem{rais} {\sc M.\,Ra\"is}. 
L'indice des produits semi-directs $E\times_{\rho}\g$, 
{\it C.R. Acad. Sc. Paris, Ser. A}\ t.{\bf 287}\,(1978), 195--197.

\bibitem{ty-AIF} {\sc P.~Tauvel} and {\sc R.W.T.~Yu}.
Sur l'indice de certaines alg\`ebres de Lie,
{\it Ann. Inst. Fourier} (Grenoble) {\bf 54}\,(2004), no. 6, 1793--1810.

\bibitem{ty-jA} {\sc P.~Tauvel} and {\sc R.W.T.~Yu}. 
Indice et formes lin{\'e}aires stables dans les alg\`ebres de Lie, 
{\it J. Algebra}, {\bf 273}\,(2004), 507--516.
\end{thebibliography}
\end{document}